\documentclass[final]{siamart250211}

\usepackage{amsfonts,amsmath,amssymb,bbm}
\usepackage{mathrsfs,mathtools,stmaryrd,wasysym}
\usepackage{xspace,mydef}
\usepackage{esint}
\usepackage{comment}
\usepackage[shortlabels]{enumitem}
\usepackage{tensor}
\DeclareMathAlphabet{\mathpzc}{OT1}{pzc}{m}{it}

\crefname{enumi}{assumption}{assumptions}
\Crefname{enumi}{Assumption}{Assumptions}

\newcommand{\TheTitle}{A pointwise tracking optimal control problem for a fractional, semilinear PDE}
\newcommand{\ShortTitle}{A pointwise tracking optimal control problem}
\newcommand{\TheAuthors}{E.Ot\'arola, A.J.~Salgado}

\headers{\ShortTitle}{\TheAuthors}

\title{\TheTitle}

\author{
  Enrique Ot\'arola\thanks{Departamento de Matem\'atica, Universidad T\'ecnica Federico Santa Mar\'ia, Valpara\'iso, Chile. (\email{enrique.otarola@usm.cl}, \url{http://eotarola.mat.utfsm.cl/})}
\and
  Abner J.~Salgado\thanks{Department of Mathematics, University of Tennessee, Knoxville, TN 37996, USA.
    (\email{asalgad1@utk.edu}, \url{https://math.utk.edu/people/abner-salgado/})}
}

\ifpdf
\hypersetup{
  pdftitle={\TheTitle},
  pdfauthor={\TheAuthors}
}
\fi

\begin{document}

\maketitle


\begin{abstract}
We analyze an optimal control problem with pointwise tracking for a fractional semilinear elliptic partial differential equation. The diffusion is characterized by the spectral fractional Laplacian $(-\Delta)^s$ with $s \in (1/2,1)$, a range that guarantees the well-posedness of point evaluations of the state. In addition to the nonconvexity of the control problem, the main difficulty is that the adjoint equation is a fractional partial differential equation with a singular right-hand side: a linear combination of Dirac measures. We establish the existence of optimal solutions and derive first-order as well as necessary and sufficient second-order optimality conditions.
\end{abstract}

\begin{keywords}
Optimal control problem, fractional diffusion, nonlocality, spectral fractional Laplacian, pointwise tracking, singular forces, Dirac measures, existence results, optimality conditions.
\end{keywords}

\begin{MSCcodes}
  35R06,    
  35R11,    
  49J20,    
  49K20.    
\end{MSCcodes}

\section{Introduction}
\label{sec:Introduction}

The aim of this paper is to study an optimal control problem with pointwise tracking for a fractional, semilinear, elliptic partial differential equation (PDE) that governs the state variable. Specifically, let $\Omega \subset \Real^2$ be a bounded, convex polygon, $s \in (\tfrac12,1)$, and $\Laps$ the fractional Laplace operator in the sense of spectral theory. In addition, let $\mathcal{D} \subset \Omega$ be a finite set of observable points, $\{ u_\vertex \}_{\vertex \in \mathcal{D}} \subset \mathbb{R}$ a set of desired states, and $\alpha >0$ a regularization parameter. With these ingredients, we introduce the cost functional
\begin{equation}
\label{eq:cost_functional}
 J(u,q) \coloneqq \frac{1}{2} \sum_{\vertex \in \mathcal{D}} | u(\vertex) - u_\vertex |^2 + \frac{\alpha}{2} \| q \|^2_{L^2(\Omega)}.
\end{equation}
Given a function $f$ and control bounds $a,b \in \mathbb{R}$, with $-\infty < a < b < \infty$, the pointwise tracking optimal control problem is as follows: Find $\min J(u,q)$ subject to the fractional, semilinear, elliptic PDE
\begin{equation}
  (-\Delta)^s u + \fraka(\cdot,u)= f + q \text{ in } \Omega,
  \label{eq:state_equation_strong}
\end{equation}
and the control constraints $q \in \mathbb{Q}_{ad}$, where
\begin{equation}
\label{eq:DefOfQ}
  \mathbb{Q}_{ad} \coloneqq \left\{ v \in L^2(\Omega): a \leq v(x) \leq b~\mae~x \in \Omega \right\}.
\end{equation}
Assumptions on the nonlinear function $\fraka$ will be deferred until \Cref{sub:Assumptions_on_a}.

The pointwise tracking optimal control problem described above is generally nonconvex because the state equation is nonlinear. Consequently, first-order optimality conditions are not sufficient, and a comprehensive study of this optimization problem requires examining second-order optimality conditions. Another main difficulty in analyzing this pointwise tracking optimal control problem is that the so-called \emph{adjoint problem} is a fractional PDE with a linear combination of Dirac measures on the right-hand side, namely:
\[
 (-\Delta)^s p + \frac{\partial \fraka}{\partial u} (\cdot,u) = \sum_{\vertex \in \mathcal{D}} (u(\vertex) - u_\vertex) \delta_\vertex \quad \text{in } \Omega.
\]
Here, $\delta_\vertex$ denotes the Dirac delta supported at the interior observation point $\vertex \in \mathcal{D}$.

Several works have studied pointwise tracking optimal control problems governed by integer-order PDEs. To our knowledge, the first treatment of a pointwise tracking optimal control problem for the Poisson problem is \cite{MR3449612}, which was later complemented and extended in \cite{MR3523574,MR3800041,MR3973329,MR4957570}. The semilinear case (problem \cref{eq:state_equation_strong} with $s = 1$) is analyzed in \cite{MR4438718}; because the control problem is generally non-convex in this setting, second order optimality conditions are also established in \cite{MR4438718}. Related results for fluid-flow models include pointwise tracking control of the Stokes system \cite{MR4304887,MR4218114} and the Navier--Stokes system \cite{MR5005578}. More recent contributions address a multiobjective pointwise tracking problem \cite{fuica2025errorestimatesmultiobjectiveoptimal} and a pointwise tracking optimal control problem for a Cahn–Hilliard–Navier–Stokes system \cite{dharmatti2026pointwisetrackingoptimalcontrol}.

In contrast to these advances, and to the best of our knowledge, this is the first paper to address the pointwise tracking optimal control problem described above for the fractional semilinear elliptic PDE \cref{eq:state_equation_strong}. Most available works consider cost functionals such as:
\begin{equation*}
 \mathcal{J}_1(u,q) = \frac{1}{2}  \| u - u_d \|_{L^2(\Omega)}^2 + \frac{\alpha}{2} \| q \|^2_{L^2(\Omega)},
 \quad
 \mathcal{J}_2(u,q) = \int_{\Omega}  L(x,u(x))\mathrm{d}x + \frac{\alpha}{2} \| q \|^2_{L^2(\Omega)},
\end{equation*}
where $L: \Omega \times \mathbb{R} \rightarrow \mathbb{R}$ is a suitable Carath\'eodory function and $u_d \in L^2(\Omega)$ is a desired state.
Focusing on optimal control problems involving the  \emph{spectral} fractional Laplacian, we note the following advances: the linear-quadratic setting with cost functional $\mathcal{J}_1$ has been studied in \cite{MR3429730,MR4015150}, while the semilinear setting with cost functional $\mathcal{J}_2$ has been addressed in \cite{MR4055455}. For problems involving the \emph{integral} fractional Laplacian, see \cite{MR3990191,MR4667825} for the linear-quadratic case with cost functional $\mathcal{J}_1$ and \cite{MR4358465,MR4055455} for the semilinear case with cost functional $\mathcal{J}_2$. We conclude this paragraph by mentioning the advances in parameter identification for nonlocal and fractional operators in \cite{MR3472639,MR4369059,MR4441219}, as well as results on optimal design for nonlocal models \cite{MR3339075,MR4989257}, optimal control of peridynamics models \cite{MR4629861}, optimal control of nonlocal operators \cite{MR4727532,MR4793106}, external optimal control \cite{MR3988258}, and optimal control with state constraints \cite{MR4119494}.

We organize our presentation as follows. In \Cref{sec:Notation}, we establish notation, define the spectral fractional Laplacian, and present suitable embedding results, alongside the main assumptions on the nonlinear function $\mathfrak{a}$ under which we operate. \Cref{sec:Fractional_PDEs_with_measures} provides the necessary framework to analyze the adjoint problem by extending the analysis of \cite{Fractional_Delta} to fractional linear PDEs with lower order terms and measure-valued right-hand sides. In \Cref{sec:Fractional_semilinear_PDEs}, we review and extend existence and regularity results for fractional semilinear PDEs. These results provide the foundation for \Cref{sec:Control_problem}, where we study the pointwise tracking optimal control problem of interest. Specifically, we prove the existence of at least one global optimal solution and analyze the differentiability of the control-to-state map. Finally, by introducing the adjoint problem --- a fractional linear PDE with a linear combination of Dirac measures --- we derive first order necessary optimality conditions, as well as necessary and sufficient second order optimality conditions.

\section{Notation}
\label{sec:Notation}

Let us introduce some relations that we will use in our work. $A \coloneqq B$ denotes equality by definition. $C \eqqcolon D$ stands for $D \coloneqq C$. $A \lesssim B$ means $A \leq c B$ for a nonessential constant $c$ that may change at each occurrence. $A \gtrsim B$ means $B \lesssim A$. Finally, $A \eqsim B$ is the short form for $A \lesssim B \lesssim A$.

Throughout the text, $\Omega \subset \Real^2$ is a bounded, convex polygon and $s \in (\tfrac12,1)$. We use standard notation for classical Lebesgue and Sobolev spaces. The space of finite Radon measures on $\Omega$ is denoted by $\calM(\Omega)$; see \cite[Definition 1.9]{MR3409135}. The duality pairing between $\calM(\Omega)$ and $C_0(\bar\Omega)$ --- the space of continuous functions in $\bar \Omega$ vanishing on $\partial \Omega$ --- will be denoted by $\langle\cdot,\cdot\rangle$.

\subsection{The spectral fractional Laplacian}
\label{sub:FracLaplace}

To introduce the spectral definition of the fractional Laplace operator \cite{MR2646117,MR2825595,MR2754080,MR3348172}, we present the eigenvalue problem:
\begin{equation}
\label{eq:LapEigenPairs}
(\lambda,\varphi) \in \Real \times H_0^1(\Omega) \setminus \{ 0\}:
\quad
  (\nabla \varphi, \nabla v)_{L^2(\Omega)} = \lambda (\varphi,v)_{L^2(\Omega)}
  \quad
  \forall v \in H_0^1(\Omega),
\end{equation}
which has a countable collection of solutions $\{(\lambda_k,\varphi_k)\}_{k=1}^\infty \subset \Real^+ \times \Hunz$ such that $\{\varphi_k\}_{k=1}^\infty$ is an orthonormal basis of $L^2(\Omega)$ and an orthogonal basis of $\Hunz$ \cite{MR609148}.

For $r \geq 0$, we define, in terms of the sequence of eigenpairs $\{(\lambda_k,\varphi_k)\}_{k=1}^\infty$,
\begin{equation}
\label{eq:DefOfPolHr}
  \polH^r(\Omega) \coloneqq \left\{ w = \sum_{k=1}^\infty w_k \varphi_k \ : \
  \| w\|_{\polH^r(\Omega)} < \infty \right \},
  \, \, \,
  \| w\|_{\polH^r(\Omega)} \coloneqq \left( \sum_{k=1}^\infty \lambda_k^r |w_k|^2 \right)^{\frac{1}{2}}.
\end{equation}
For $r >0$, $\polH^{-r}(\Omega)$ is the dual space of $\polH^{r}(\Omega)$. The duality pairing between $\mathbb{H}^{-r}(\Omega)$ and $\mathbb{H}^{r}(\Omega)$ is denoted by $_{-r}\langle \cdot, \cdot \rangle_{r}$. With $_{-r}\langle \cdot, \cdot \rangle_{r}$, we can extend the definition of the norm $\| \cdot \|_{\polH^r(\Omega)}$ to negative values of $r$. In fact, we can identify an element $F$ of $\mathbb{H}^{-r}(\Omega)$ with a sequence $\{ F_k \}_{k=1}^{\infty}$ such that
\[
 \| F \|_{\polH^{-r}(\Omega)} \coloneqq
 \left(
 \sum_{k=1}^\infty \lambda_k^{-r}|F_k|^2
 \right)^{\frac{1}{2}}
 < \infty.
\]

For $s \in (0,1)$ and $w \in C_0^\infty(\Omega)$, we define the \emph{spectral fractional Laplacian} as
\begin{equation}
\label{eq:DeofOfLaps}
  \Laps w \coloneqq \sum_{k=1}^\infty \lambda_k^s w_k \varphi_k, \qquad w_k \coloneqq \int_\Omega w \varphi_k \diff x.
\end{equation}
The operator $\Laps$ can be extended to $\mathbb{H}^s(\Omega)$ by density: $\Laps: \mathbb{H}^s(\Omega) \rightarrow \mathbb{H}^{-s}(\Omega)$. We note that $\Laps$ is an isomorphism between $\polH^s(\Omega)$ and its dual space $\polH^{-s}(\Omega)$.

We conclude this section with the following characterization of the spaces $\polH^r(\Omega)$ and certain embeddings.

\begin{proposition}[characterization of $\mathbb{H}^r(\Omega)$ for $0 \leq r \leq 2$]
\label{pro:characterization}
We have that
\[
   \polH^r(\Omega) = \begin{dcases}
                       H^r(\Omega), & r \in \left[ 0,\tfrac12 \right),\\
                       H^{\frac{1}{2}}_{00}(\Omega), & r = \tfrac12, \\
                       H_0^r(\Omega), & r \in \left(\tfrac12,1\right],
                     \end{dcases}
 \]
with equivalent norms. Moreover, if $r \in (1,2]$, then $\polH^r(\Omega) = H_0^1(\Omega) \cap H^r(\Omega)$, with equivalent norms. Consequently, for $r \in (0,1)$
\[
  \polH^r(\Omega) \hookrightarrow L^p(\Omega) \qquad \forall p \in \left[ 1, \frac2{1-r} \right)
\]
compactly and, if $r>1$, $\polH^r(\Omega) \hookrightarrow C(\bar\Omega)$ compactly.
\end{proposition}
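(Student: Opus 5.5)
The characterization is obtained by interpolation, anchored at the integer levels. The embeddings then follow from the Sobolev embedding theorem in two dimensions.

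\textbf{Anchor spaces.} By definition, $\polH^0(\Omega)=L^2(\Omega)$ with equal norms, since $\{\varphi_k\}$ is an orthonormal basis. Also $\polH^1(\Omega)=H_0^1(\Omega)$, because $\|\nabla w\|_{L^2(\Omega)}^2=\sum_k\lambda_k|w_k|^2$ for $w\in H_0^1(\Omega)$, which follows from \cref{eq:LapEigenPairs} and Parseval. For $r=2$, the operator $-\Delta$ with homogeneous Dirichlet conditions has domain $\{w\in H_0^1(\Omega):\Delta w\in L^2(\Omega)\}$ and satisfies $\|\Delta w\|_{L^2(\Omega)}^2=\sum_k\lambda_k^2|w_k|^2$. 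Since $\Omega$ is a convex polygon, $H^2$ elliptic regularity holds, with $\|w\|_{H^2(\Omega)}\lesssim\|\Delta w\|_{L^2(\Omega)}$. Hence $\polH^2(\Omega)=H^2(\Omega)\cap H_0^1(\Omega)$ with equivalent norms. This is the only point where convexity is used.

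\textbf{Interpolation.} The family $\{\polH^r(\Omega)\}_{r\ge0}$ is a weighted $\ell^2$ scale. A direct computation with the $K$-functional, or with the complex method applied to diagonal operators, gives the exact identity $[\polH^{r_0},\polH^{r_1}]_\theta=\polH^{(1-\theta)r_0+\theta r_1}$. Interpolating between $L^2(\Omega)$ and $H_0^1(\Omega)$ with $\theta=r$ then gives $\polH^r(\Omega)=[L^2(\Omega),H_0^1(\Omega)]_r$ for $r\in(0,1)$. By the Lions--Magenes theorem, valid on Lipschitz domains, this interpolation space equals $H^r(\Omega)$ for $r<\tfrac12$, $H^{1/2}_{00}(\Omega)$ for $r=\tfrac12$, and $H_0^r(\Omega)$ for $r\in(\tfrac12,1)$. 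For $r\in(1,2)$, interpolating between $H_0^1(\Omega)$ and $H^2(\Omega)\cap H_0^1(\Omega)$ should yield $H^r(\Omega)\cap H_0^1(\Omega)$.

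\textbf{Main obstacle.} I expect the interpolation of the subspaces $H^2(\Omega)\cap H_0^1(\Omega)$ and $H_0^1(\Omega)$ to be the delicate step, since interpolation of closed subspaces does not commute with intersection in general. I would handle it as follows. First, construct a bounded extension operator from $\Omega$ to $\mathbb{R}^2$. Second, construct a projection $P$, bounded simultaneously on $H^1(\Omega)$ and on $H^2(\Omega)$, onto the functions vanishing on $\partial\Omega$; a natural choice is $w\mapsto w-\mathcal{E}(w|_{\partial\Omega})$ with a suitable trace-lifting $\mathcal{E}$. Applying the standard retraction argument then reduces the claim to interpolation of $H^1(\Omega)$ and $H^2(\Omega)$, which gives $H^r(\Omega)$. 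Alternatively, I would cite the known result of Grisvard for polygonal domains.

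\textbf{Embeddings.} For $r\in(0,1)$, the space $H^r(\Omega)$ on a two-dimensional Lipschitz domain embeds continuously into $L^{2/(1-r)}(\Omega)$. Since $\Omega$ is bounded, it therefore embeds compactly into $L^p(\Omega)$ for every $p<2/(1-r)$, by Rellich--Kondrachov. For $r>1$, in dimension two $H^r(\Omega)\hookrightarrow C^{0,\beta}(\bar\Omega)$ for some $\beta>0$, and Arzel\`a--Ascoli then gives compactness in $C(\bar\Omega)$. For $r>2$ it suffices to use the trivial inclusion $\polH^r(\Omega)\hookrightarrow\polH^2(\Omega)$.
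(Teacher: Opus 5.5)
Your proposal is correct and is essentially the argument the paper delegates to its references: interpolation of the spectral scale anchored at $L^2(\Omega)$, $H_0^1(\Omega)$ and, via $H^2$-regularity on the convex polygon, $H^2(\Omega)\cap H_0^1(\Omega)$, with Lions--Magenes for $r<1$, followed by standard fractional Sobolev embedding and compactness results. Your H\"older embedding plus Arzel\`a--Ascoli for $r>1$ replaces the paper's Besov-space route, which is an inessential difference.

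For the step you flag, a trace lifting bounded simultaneously into $H^1(\Omega)$ and $H^2(\Omega)$ is indeed delicate on a polygon because of corner compatibility conditions, so citing the literature, as the paper does, is reasonable. Note, however, that the inclusion $H^{1+t}(\Omega)\cap H_0^1(\Omega)\subset\mathbb{H}^{1+t}(\Omega)$ needs no lifting at all. Indeed, $\|u\|_{\mathbb{H}^{1+t}(\Omega)}$ is exactly the $\mathbb{H}^{t-1}(\Omega)$-norm of $v\mapsto(\nabla u,\nabla v)_{L^2(\Omega)}$, and this is bounded by a multiple of $\|u\|_{H^{1+t}(\Omega)}$ through $H^{t}$--$\widetilde H^{-t}$ duality once $\mathbb{H}^{1-t}(\Omega)$ is identified with the zero-extension space. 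The reverse inclusion is just interpolation of the embeddings into $H^1(\Omega)$ and $H^2(\Omega)$.
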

\begin{proof}
For details on the characterizations, see \cite{MR350177,MR1742312,MR2328004,MR216336,MR3343061,MR3356020}. We then use the embedding properties of standard Sobolev spaces to conclude the claimed continuous embeddings (see \cite[Theorem 6.7]{MR2944369} and \cite[Theorem 4.12, \textbf{PART II}]{MR2424078}). The compact embedding for $r<1$ can be found in \cite[Corollary 7.2]{MR2944369}, while that for $r>1$ follows from \cite[Theorem 7.37]{MR2424078} and \cite[Theorem 1.34]{MR2424078} together with $B^r_{2,2}(\Omega) = H^r(\Omega)$, which can be reduced to $\Real^2$ (which is standard) via a suitable extension operator.
\end{proof}

\subsection{Assumptions on $\fraka$}
\label{sub:Assumptions_on_a}

We make the following assumptions on the nonlinear function $\fraka$. However, some results in this work hold under less restrictive conditions. When possible, we explicitly state the assumptions on $\fraka$ required for a specific result.

\begin{enumerate}[label=(A.\arabic*)]
\item \label{A1} $\fraka:\Omega\times \Real \rightarrow \Real$ is a Carath\'eodory function of class $C^2$ with respect to the second variable and $\fraka(\cdot,0)\in L^{r}(\Omega)$ for $r > 1/s$.

\item \label{A2} $\tfrac{\partial \fraka}{\partial u}(x,u)\geq 0$ for almost every $x\in\Omega$ and for all $u \in \Real$.

\item \label{A3} For all $\mathfrak{m}>0$, there exists a constant $C_{\mathfrak{m}} > 0$ such that 
\begin{equation*}
\sum_{i=1}^{2}\left|\frac{\partial^{i} \fraka}{\partial u^{i} }(x,u)\right|\leq C_{\mathfrak{m}},
\qquad
\left|\frac{\partial^{2} \fraka}{\partial u^{2} }(x,v)- \frac{\partial^{2} \fraka}{\partial u^{2} }(x,w)\right|\leq C_{\mathfrak{m}} |v-w|
\end{equation*}
for almost every $x\in \Omega$ and for all $u,v,w \in [-\mathfrak{m},\mathfrak{m}]$.
\end{enumerate}

\section{Fractional PDEs with measure-valued right-hand sides}
\label{sec:Fractional_PDEs_with_measures}

In this section, we analyze the following boundary value problem involving the spectral fractional Laplacian and a measure-valued right-hand side:
\begin{equation}
\label{eq:fractional_delta}
  \Laps \mathfrak{u} + c \mathfrak{u} = \mu \text{ in } \Omega.
\end{equation}
Here, the coefficient $c \in L^{\infty}(\Omega)$ satisfies $c \geq 0$ almost everywhere in $\Omega$, and the right-hand side $\mu \in \calM(\Omega)$. To introduce an appropriate formulation for this problem, we follow \cite{Fractional_Delta}, choose $\theta \in (1-s,s)$, and define the bilinear form
\begin{equation}
 \label{eq:mathcalA1}
  \calA : \polH^{s-\theta}(\Omega) \times \polH^{s+\theta}(\Omega) \to \Real,
  \qquad
  (v,w) \mapsto \calA(v,w) \coloneqq \sum_{k=1}^\infty \lambda_k^s v_k w_k,
\end{equation}
where
\begin{equation}
\label{eq:v_and_w}
  v = \sum_{k=1}^\infty v_k \varphi_k, 
  \qquad 
  w = \sum_{k=1}^\infty w_k \varphi_k.
\end{equation}
It is clear that the parameters $s$ and $\theta$ satisfy the following important inequalities:
\begin{equation}
\label{eq:s_and_theta}
 0 < s - \theta < 2s -1 < 1,
 \qquad
 1 < s + \theta < 2s < 2.
\end{equation}

With the bilinear form $\mathcal{A}$ at hand, we propose the following formulation for problem \cref{eq:fractional_delta}: Find $\mathfrak{u} \in \polH^{s-\theta}(\Omega)$ such that
\begin{equation}
\label{eq:WeakFormulation}
\calA(\mathfrak{u},v) + \int_{\Omega} c \mathfrak{u} v \mathrm{d}x= \langle \mu, v \rangle \qquad \forall v \in \polH^{s+\theta}(\Omega).
\end{equation}

The following comments are now in order. First, $\mathcal{A}$ is bounded and satisfies the inf-sup conditions stated in \cite[Theorem 3.2]{Fractional_Delta}. Second, $s \pm \theta > $ guarantees that $\mathbb{H}^{s\pm\theta}(\Omega) \hookrightarrow L^2(\Omega)$. As a result, the second term on the left-hand side of \cref{eq:WeakFormulation} is well-defined. Third, since $1< s + \theta < 2s$, we have $\polH^{s+\theta}(\Omega) \hookrightarrow C_0(\bar \Omega)$ (see \cref{pro:characterization}). Consequently, $\mu$ defines a bounded linear functional on $\mathbb{H}^{s+\theta}(\Omega)$, and the right-hand side of equation \cref{eq:WeakFormulation} is well-defined.

We now prove the well-posedness of problem \cref{eq:WeakFormulation}.

\begin{theorem}[well-posedness]
\label{thm:BNB}
For every $\mu \in \calM(\Omega)$, there is a unique solution $\mathfrak{u} \in \polH^{s-\theta}(\Omega)$ to problem \cref{eq:WeakFormulation}. In addition, $\mathfrak{u}$ satisfies the following stability bound
  \begin{equation}
  \label{eq:continuous_stability_measure}
    \| \mathfrak{u} \|_{\polH^{s-\theta}(\Omega)} \lesssim \| \mu \|_{\calM(\Omega)},
  \end{equation}
  where the implicit constant depends only on $s$, $\theta$, and $\Omega$.
\end{theorem}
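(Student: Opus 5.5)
The plan is to apply the Banach--Ne\v{c}as--Babu\v{s}ka theorem to the perturbed bilinear form
\[
\mathcal{B}(v,w) \coloneqq \calA(v,w) + \int_\Omega c\, v w \,\mathrm{d}x,
\qquad v \in \polH^{s-\theta}(\Omega),\; w \in \polH^{s+\theta}(\Omega).
\]
The inf-sup conditions for $\calA$ alone are already available from \cite[Theorem 3.2]{Fractional_Delta}. The lower-order term is not small, so rather than a perturbation argument I would use a Fredholm-type argument.

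\textbf{Step 1 (boundedness).} Boundedness of $\mathcal{B}$ follows from that of $\calA$ together with
\[
\left|\int_\Omega c v w\right| \le \|c\|_{L^\infty(\Omega)} \|v\|_{L^2(\Omega)}\|w\|_{L^2(\Omega)}
\]
and the embeddings $\polH^{s\pm\theta}(\Omega)\hookrightarrow L^2(\Omega)$. The right-hand side is bounded as well, since $\polH^{s+\theta}(\Omega)\hookrightarrow C_0(\bar\Omega)$ gives $|\langle \mu, v\rangle| \lesssim \|\mu\|_{\calM(\Omega)}\|v\|_{\polH^{s+\theta}(\Omega)}$.

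\textbf{Step 2 (Fredholm structure).} Let $T:\polH^{s-\theta}(\Omega)\to \polH^{-s-\theta}(\Omega)$ be the operator induced by $\mathcal{B}$. It splits as $T=A+K$:
\begin{itemize}
\item $A$ is induced by $\calA$. In coefficients it is $v_k \mapsto \lambda_k^s v_k$, which is an isometric isomorphism between $\polH^{s-\theta}(\Omega)$ and $\polH^{-s-\theta}(\Omega)$ (indeed $\lambda_k^{-s-\theta}\lambda_k^{2s}=\lambda_k^{s-\theta}$).
\item $K$ is $v\mapsto cv$, viewed as a map into $\polH^{-s-\theta}(\Omega)$. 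It factors as
\[
\polH^{s-\theta}(\Omega)\hookrightarrow L^2(\Omega) \xrightarrow{\;\cdot c\;} L^2(\Omega) \hookrightarrow \polH^{-s-\theta}(\Omega).
\]
The last embedding is compact, being the dual of the compact embedding $\polH^{s+\theta}(\Omega)\hookrightarrow L^2(\Omega)$ from \cref{pro:characterization}. Hence $K$ is compact.
\end{itemize}
Therefore $T$ is Fredholm of index zero, and it suffices to prove injectivity.

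\textbf{Step 3 (injectivity, the main obstacle).} Suppose $v\in\polH^{s-\theta}(\Omega)$ satisfies $\mathcal{B}(v,w)=0$ for all $w \in \polH^{s+\theta}(\Omega)$. The difficulty is that $v$ has too little regularity to be tested against itself. I would first bootstrap:
\begin{itemize}
\item The equation reads $\Laps v = -cv$ in the sense of $\polH^{-s-\theta}(\Omega)$. Since $cv\in L^2(\Omega)$, the coefficient form of $\Laps^{-1}$ gives $v\in\polH^{2s}(\Omega)\subset \polH^{s}(\Omega)$.
\item By density of $\polH^{s+\theta}(\Omega)$ in $\polH^s(\Omega)$ and continuity of both terms on $\polH^s(\Omega)\times\polH^s(\Omega)$, the identity extends to test functions $w\in\polH^s(\Omega)$.
\item Taking $w=v$ gives
\[
\|v\|_{\polH^s(\Omega)}^2+\int_\Omega c v^2 = 0,
\]
and since $c \ge 0$ this forces $v=0$.
\end{itemize}

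\textbf{Step 4 (conclusion).} By Steps 2 and 3, $T$ is an isomorphism, so BNB (equivalently, the bounded inverse theorem) yields existence, uniqueness and the bound
\[
\|\mathfrak{u}\|_{\polH^{s-\theta}(\Omega)}\lesssim \|\mu\|_{\polH^{-s-\theta}(\Omega)}\lesssim\|\mu\|_{\calM(\Omega)}.
\]

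One caveat concerns the constant. Fredholm theory only guarantees some constant, which a priori depends on $c$. To get a constant depending only on $s$, $\theta$ and $\Omega$ as stated, I would instead write $\mathfrak{u}$ explicitly: let $\mathfrak{u}_0$ solve the problem with $c=0$, then set $\mathfrak{u}=\mathfrak{u}_0-z$, where $z\in\polH^s(\Omega)$ solves the coercive problem
\[
\Laps z + c z = c\,\mathfrak{u}_0 .
\]
An $L^2$/maximum-principle type estimate for this $z$ would be needed to get a $c$-independent bound. If that fails, I would accept a constant that also depends on $\|c\|_{L^\infty(\Omega)}$.
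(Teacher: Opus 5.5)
Your proof is correct, but it takes a different route from the paper. The paper runs the method of continuity along $(1-t)L_0+tL_1$ and gets a $t$-uniform a priori bound by duality. For $\varphi\in C_0^\infty(\Omega)$ it solves $\mathcal{L}_t^\star w=\varphi$ by Lax--Milgram and lifts $w$ to $\mathbb{H}^{2s}(\Omega)\hookrightarrow\mathbb{H}^{s+\theta}(\Omega)$. It then tests the equation for $\mathfrak{u}_t$ with $w$ to control $\|\mathfrak{u}_t\|_{L^2(\Omega)}$, and this absorbs the lower-order term in the estimate coming from the isomorphism property of $\mathcal{A}$. You instead split the operator into an isomorphism plus a compact perturbation, so everything reduces to injectivity. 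You apply the $L^2\to\mathbb{H}^{2s}$ regularity lift to the kernel element itself, which is legitimate because its data $-cv$ lies in $L^2(\Omega)$, rather than to a dual solution. You can also skip your density step: $v\in\mathbb{H}^{2s}(\Omega)\subset\mathbb{H}^{s+\theta}(\Omega)$ is already an admissible test function. Both arguments use the same three ingredients: the isomorphism $\mathcal{A}$, the $L^2\to\mathbb{H}^{2s}$ lift, and coercivity on $\mathbb{H}^s(\Omega)$ from $c\geq 0$. Yours is shorter and avoids the homotopy; the paper's is quantitative.

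That last difference is the one point to tighten. Fredholm theory only gives $\|T^{-1}\|<\infty$, with a size that depends on $c$ itself in an uncontrolled way, not merely on $\|c\|_{L^\infty(\Omega)}$. The later uses of this theorem need more than that. Both the stability bound for the adjoint state and the Lipschitz estimate for $j''$ require a constant that is uniform over $c=\partial_u\mathfrak{a}(\cdot,\mathcal{S}q)$, $q\in\mathbb{Q}_{ad}$.

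Your decomposition $\mathfrak{u}=\mathfrak{u}_0-z$ already delivers this with Lax--Milgram alone. Since $c\geq 0$, you get $\|z\|_{\mathbb{H}^s(\Omega)}\lesssim\|c\|_{L^\infty(\Omega)}\|\mathfrak{u}_0\|_{L^2(\Omega)}\lesssim\|c\|_{L^\infty(\Omega)}\|\mu\|_{\mathcal{M}(\Omega)}$. Hence $\mathfrak{u}_0-z$ is a solution satisfying $\|\mathfrak{u}\|_{\mathbb{H}^{s-\theta}(\Omega)}\lesssim(1+\|c\|_{L^\infty(\Omega)})\|\mu\|_{\mathcal{M}(\Omega)}$. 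Together with your Step 3 (uniqueness), this proves the theorem with no Fredholm theory at all.

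No maximum-principle estimate for $z$ is needed. The paper's own proof also produces a constant depending on $\|c\|_{L^\infty(\Omega)}$; it says so explicitly after the $\mathbb{H}^{s+\theta}$ bound for $w$. So the claim that the constant depends only on $s$, $\theta$, and $\Omega$ is not what the paper's argument delivers either, and your fallback coincides with what is actually proved.
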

\begin{proof}
If $c \equiv 0$, a proof of the well-posedness of problem \cref{eq:WeakFormulation} is given in \cite[Theorem 3.2]{Fractional_Delta}. In the case where $ 0 \leq c \in L^{\infty}(\Omega)$, we provide a proof based on the method of continuity; see, for example, \cite[Theorem 5.2]{MR1814364}. We divide the proof into several steps.

\emph{Step 1}. \emph{The map $L_0$.} We define the linear map $L_0 : \mathbb{H}^{s-\theta}(\Omega) \rightarrow \mathbb{H}^{-s-\theta}(\Omega)$ as
\[
\tensor[_{-s-\theta}]{\langle L_0 v, w \rangle}{_{s+\theta}} \coloneqq \mathcal{A}(v,w) 
\qquad 
\forall v \in \mathbb{H}^{s-\theta}(\Omega), \forall w \in \mathbb{H}^{s+\theta}(\Omega).
\]
The map $L_0$ is bounded: For every $v \in \mathbb{H}^{s-\theta}(\Omega)$, we have $\| L_0 v \|_{\mathbb{H}^{-s-\theta}(\Omega)} \leq \| v \|_{\mathbb{H}^{s-\theta}(\Omega)}$.

\emph{Step 2}. \emph{The map $L_1$.} We define the linear map $L_1 : \mathbb{H}^{s-\theta}(\Omega) \rightarrow \mathbb{H}^{-s-\theta}(\Omega)$ as
\[
\tensor[_{-s-\theta}]{\langle L_1 v, w \rangle}{_{s+\theta}} \coloneqq \mathcal{A}(v,w) + \int_{\Omega} c v w \mathrm{d}x 
\qquad 
\forall v \in \mathbb{H}^{s-\theta}(\Omega), \forall w \in \mathbb{H}^{s+\theta}(\Omega).
\]
$L_1$ is bounded: $\| L_1 v \|_{\mathbb{H}^{-s-\theta}(\Omega)} \leq (1 + C \| c \|_{L^{\infty}(\Omega)}) \| v \|_{\mathbb{H}^{s-\theta}(\Omega)}$ for every $v \in \mathbb{H}^{s-\theta}(\Omega)$. To obtain this bound, we used the continuity of the embeddings $\mathbb{H}^{s\pm\theta}(\Omega) \hookrightarrow L^2(\Omega)$, as well as the fact that $c \in L^{\infty}(\Omega)$. $C >0$ is a constant that depends on $s$, $\theta$, and $\Omega$.

\emph{Step 3}. \emph{The a priori estimate \cref{eq:continuous_stability_measure}}. For $t \in [0,1]$, we introduce the map $\mathcal{L}_t$ as
\[
 \mathcal{L}_t: \mathbb{H}^{s-\theta}(\Omega) \rightarrow \mathbb{H}^{-s-\theta}(\Omega),
 \qquad
 \mathcal{L}_t \coloneqq (1-t) L_0 + t L_1.
\]
We immediately note that $\calL_t$ is a homotopy between $L_0$ and $L_1$ in the space of bounded linear operators
from $\mathbb{H}^{s-\theta}(\Omega)$ to $\mathbb{H}^{-s-\theta}(\Omega)$. We now consider, for $t \in [0,1]$, the family of problems:
\begin{equation}
\label{eq:problem_t}
\mathfrak{u}_t \in \mathbb{H}^{s-\theta}(\Omega):
\quad
\tensor[_{-s-\theta}]{\langle \mathcal{L}_t \mathfrak{u}_t, v \rangle}{_{s+\theta}} =
 \langle \mu, v \rangle 
 \quad \forall v \in \polH^{s+\theta}(\Omega).
\end{equation}
The solvability of problem \cref{eq:problem_t} is therefore equivalent to the invertibility of the map $\mathcal{L}_t$. Let $\mathfrak{u}_t \in \mathbb{H}^{s-\theta}(\Omega)$ be a solution of problem \cref{eq:problem_t}. In the following, we prove that $\mathfrak{u}_t$ satisfies the bound
\begin{equation}
\label{eq:bound_t}
 \|  \mathfrak{u}_t \|_{\polH^{s-\theta}(\Omega)} \lesssim \| \mu \|_{\mathbb{H}^{-s-\theta}(\Omega)},
\end{equation}
where the hidden constant is independent of $t$. Note that the estimate above is equivalent to $\|  \mathfrak{u}_t \|_{\polH^{s-\theta}(\Omega)} \lesssim \|  \mathcal{L}_t \mathfrak{u}_t \|_{\polH^{-s-\theta}(\Omega)}$.

To derive \cref{eq:bound_t}, we proceed as follows. Let $\calL_t^\star$ denote the formal adjoint of $\calL_t$, which actually coincides with $\calL_t$. Given $\varphi \in C_0^\infty(\Omega)$, let $w = w(\varphi) \in \polH^s(\Omega)$ denote the solution to
\begin{equation}
\label{eq:w_solves_a_problem}
  \tensor[_{-s}]{\langle\calL_t^\star w, v \rangle}{_{s}} = \int_\Omega \varphi v \diff x \quad \forall v \in \polH^s(\Omega).
\end{equation}
The existence and uniqueness of such a  $w$ are guaranteed by the Lax-Milgram lemma, which also provides the estimate
\begin{equation}
\label{eq:ClaimForL2Estimate}
  \| w\|_{L^2(\Omega)} \lesssim \| w \|_{\polH^s(\Omega)} \lesssim \| \varphi \|_{\polH^{-s}(\Omega)} \lesssim \| \varphi \|_{L^2(\Omega)}.
\end{equation}
To obtain \cref{eq:ClaimForL2Estimate}, we have also used the Sobolev embeddings $\polH^s(\Omega) \hookrightarrow L^2(\Omega)$ and $L^2(\Omega) \hookrightarrow  \polH^{-s}(\Omega)$. We must also note that this $w$ solves the problem
\[
  \Laps w = \varphi - t c w \quad \text{in } \Omega,
  \qquad
  \varphi - t c w \in \Ldeux.
\]
It follows directly from the definitions of $\Laps$ and the spaces $\polH^r(\Omega)$ that $w \in \polH^{2s}(\Omega) \hookrightarrow \polH^{s+\theta}(\Omega)$; the latter holds because $1< s + \theta < 2s$. In addition, we have
\[
  \| w \|_{\polH^{s+\theta}(\Omega)} \lesssim \| w \|_{\polH^{2s}(\Omega)} \lesssim \| \varphi - t c w \|_{\Ldeux} \leq \| \varphi \|_\Ldeux + \| c \|_{L^\infty(\Omega)} \| w \|_{L^2(\Omega)}.
\]
We now use bound \cref{eq:ClaimForL2Estimate} to conclude that
\begin{equation}
\label{eq:HsptEstimateInLieuOfUniqueness}
  \| w \|_{\polH^{s+\theta}(\Omega)} \lesssim \| \varphi \|_\Ldeux,
\end{equation}
where the hidden constant is independent of $t$ but depends on $s$, $\theta$, $\| c\|_{\Linf}$, and $\Omega$.

The next step toward proving  \cref{eq:bound_t} is to realize that
$\mathfrak{u}_t$ can be viewed as the solution to the following problem:
\begin{equation}
\label{eq:problem_t_can_be_seen}
\mathfrak{u}_t \in \mathbb{H}^{s-\theta}(\Omega):
\quad
 \tensor[_{-s-\theta}]{\langle L_0 \mathfrak{u}_t, v \rangle}{_{s+\theta}} =
 \langle \mu, v \rangle  - t \int_{\Omega} c \mathfrak{u}_t v \mathrm{d}x
\quad \forall v \in \polH^{s+\theta}(\Omega).
\end{equation}
Applying \cite[Theorem 3.2]{Fractional_Delta}, we deduce the bound 
\begin{equation}
 \label{eq:bound_Garding}
 \|  \mathfrak{u}_t \|_{\polH^{s-\theta}(\Omega)} \lesssim \|  \mu \|_{\polH^{-s-\theta}(\Omega)} + C \| c \|_{L^{\infty}(\Omega)} \|  \mathfrak{u}_t \|_{L^2(\Omega)}
 \quad
 \forall t \in [0,1],
\end{equation}
where $C>0$ depends only on $s$, $\theta$, and $\Omega$. Thus, we now focus on controlling the term $\| \mathfrak{u}_t \|_{L^2(\Omega)}$ on the right-hand side of \cref{eq:bound_Garding}.
To do this, let $\varphi \in C_0^\infty(\Omega)$ be arbitrary. We use problem \cref{eq:w_solves_a_problem}, the regularity result $w \in \polH^{2s}(\Omega) \hookrightarrow \polH^{s+\theta}(\Omega)$, the definition of the formal adjoint $\calL_t^\star$ of $\calL_t$, and the fact that $\mathfrak{u}_t$ solves \cref{eq:problem_t} to obtain
\begin{align*}
  \int_\Omega \fraku_t \varphi \diff x &= \int_\Omega \fraku_t \calL_t^\star w(\varphi) \diff x =
  \tensor[_{-s-\theta}]{\langle \calL_t \fraku_t, w(\varphi) \rangle}{_{s+\theta}} = 
  \tensor[_{-s-\theta}]{\langle \mu, w(\varphi) \rangle}{_{s+\theta}}
  \\
  &\leq \| \mu \|_{\polH^{-s-\theta}(\Omega)} \| w(\varphi) \|_{\polH^{s+\theta}(\Omega)}
  \lesssim \| \mu \|_{\polH^{-s-\theta}(\Omega)} \| \varphi \|_\Ldeux,
\end{align*}
where, in the last step, we invoked \cref{eq:HsptEstimateInLieuOfUniqueness}. We can therefore conclude that
\[
  \| \fraku_t \|_\Ldeux = \sup_{0 \neq \varphi \in C_0^\infty(\Omega)} \frac1{\| \varphi \|_\Ldeux } \int_\Omega \fraku_t \varphi \diff x \lesssim \| \mu \|_{\polH^{-s-\theta}(\Omega)}.
\]
Replacing this bound in  \cref{eq:bound_Garding} implies \cref{eq:bound_t}.

\emph{Step 3}. \emph{The method of continuity}. Since $L_0$ and $L_1$ are linear and bounded operators from $\mathbb{H}^{s-\theta}(\Omega)$ into $\mathbb{H}^{-s-\theta}(\Omega)$, and the bound \cref{eq:bound_t} holds, we can apply \cite[Theorem 5.2]{MR1814364} to conclude that $L_0$ maps $\mathbb{H}^{s-\theta}(\Omega)$ onto $\mathbb{H}^{-s-\theta}(\Omega)$ if and only if $L_1$ maps $\mathbb{H}^{s-\theta}(\Omega)$ onto $\mathbb{H}^{-s-\theta}(\Omega)$. As a result, $L_1$ maps $\mathbb{H}^{s-\theta}(\Omega)$ onto $\mathbb{H}^{-s-\theta}(\Omega)$; that is, problem \cref{eq:WeakFormulation} has a solution. We now use the linearity of problem \cref{eq:WeakFormulation} and the stability bound \cref{eq:bound_t} to conclude that problem \cref{eq:WeakFormulation} has a unique solution. This completes the proof.
\end{proof} 

\section{Fractional, semilinear PDEs}
\label{sec:Fractional_semilinear_PDEs}

Let $\mathsf{f} \in L^r(\Omega)$ with $r > 1/s$, and let $\frakb:\Omega\times \Real \rightarrow \Real$ be a Carath\'eodory function that is monotone increasing in its second argument. Assume that, for every $\mathfrak{m}>0$, there exists $\psi_{\mathfrak{m}} \in L^r(\Omega)$ such that
\begin{equation}
\label{eq:a_bounded_for_psi_m}
 |\frakb(x,\zeta)| \leq \psi_{\mathfrak{m}}(x)
\end{equation}
for almost every $x \in \Omega$ and $\zeta \in [-\mathfrak{m},\mathfrak{m}]$. In this context, we present the following formulation for a slight variant of the fractional, semilinear PDE \cref{eq:state_equation_strong}:
\begin{equation}
 \label{eq:Fractional_semilinear_PDE_weak}
 \mathsf{u} \in \mathbb{H}^s(\Omega):
 \quad
 \mathcal{B}(\mathsf{u},v) + \int_{\Omega} \frakb(\cdot, \mathsf{u}) v \mathrm{d}x = \int_{\Omega} \mathsf{f} v \mathrm{d}x
 \quad
 \forall v \in \mathbb{H}^s(\Omega).
 \end{equation}
Here, $\mathcal{B}$ is defined as
\begin{equation}
 \label{eq:mathcalB}
  \mathcal{B} : \polH^{s}(\Omega) \times \polH^{s}(\Omega) \to \Real,
  \qquad
  (v,w) \mapsto \mathcal{B}(v,w) \coloneqq \sum_{k=1}^\infty \lambda_k^s v_k w_k.
\end{equation}
It is clear that $\mathcal{B}$ is bilinear and continuous on $\polH^s(\Omega) \times \polH^s(\Omega)$.

We present the following well-posedness result.

\begin{proposition}[well-posedness]
\label{pro:well-posedness_fractional_semilinear_PDE}
Assume that $\mathsf{f} \in L^r(\Omega) \cap \polH^{-s}(\Omega)$ with $r>1/s$ and that \cref{eq:a_bounded_for_psi_m} holds. In this setting, problem \cref{eq:Fractional_semilinear_PDE_weak} has a unique solution which, additionally, satisfies $\mathsf{u} \in \mathbb{H}^s(\Omega) \cap L^{\infty}(\Omega)$. In addition, we have the bound
 \begin{equation}
  \label{eq:continuous_stability_semilinear_PDE}
  \| \mathsf{u} \|_{\mathbb{H}^s(\Omega)} + \| \mathsf{u} \|_{L^{\infty}(\Omega)} \lesssim \| \mathsf{f} - \frakb(\cdot,0) \|_{L^r(\Omega)}.
 \end{equation}
\end{proposition}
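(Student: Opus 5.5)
We would follow the classical truncation plus Stampacchia approach for monotone semilinear problems (as in the integer-order case), adapted to the spectral bilinear form $\mathcal{B}$. First, we may replace $\frakb$ by $\frakb(\cdot,\zeta)-\frakb(\cdot,0)$ and $\mathsf{f}$ by $\mathsf{f}-\frakb(\cdot,0)$. The hypothesis \cref{eq:a_bounded_for_psi_m} gives $\frakb(\cdot,0) \in L^r(\Omega)$, and since $r>1/s$ we have $L^r(\Omega)\hookrightarrow \polH^{-s}(\Omega)$ by duality with \cref{pro:characterization} ($\polH^s \hookrightarrow L^{r'}$ because $r' < 2/(1-s)$ when $r>1/s$). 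After this reduction we may assume $\frakb(\cdot,0)=0$, so $\frakb(x,\zeta)\zeta\ge 0$.

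\emph{Existence.} For $M>0$ we introduce the truncation $\frakb_M(x,\zeta)\coloneqq \frakb(x,\max\{-M,\min\{M,\zeta\}\})$. By \cref{eq:a_bounded_for_psi_m}, $|\frakb_M|\le \psi_M\in L^r(\Omega)$. The operator $v\mapsto \mathcal{B}(v,\cdot)+\int_\Omega \frakb_M(\cdot,v)\,\cdot\,\mathrm{d}x$ from $\polH^s(\Omega)$ to $\polH^{-s}(\Omega)$ is then monotone, coercive (because $\mathcal{B}(v,v)=\|v\|^2_{\polH^s}$ and the nonlinear term is nonnegative), and hemicontinuous; hemicontinuity follows from dominated convergence using $\psi_M$ and the embedding $\polH^s\hookrightarrow L^{r'}$. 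The Browder--Minty theorem therefore provides a solution $\mathsf{u}_M\in\polH^s(\Omega)$.

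\emph{Uniform $L^\infty$ bound.} This is the main step. For $k>0$ we test with $(\mathsf{u}_M-k)^+$, which belongs to $\polH^s(\Omega)$; we need that truncations preserve $\polH^s=H^s_0$ for $s\in(1/2,1)$, which holds by \cref{pro:characterization}. Since $\frakb_M(\cdot,\mathsf{u}_M)\ge 0$ on $\{\mathsf{u}_M>k\}$, we obtain $\mathcal{B}(\mathsf{u}_M,(\mathsf{u}_M-k)^+)\le\int \mathsf{f}(\mathsf{u}_M-k)^+$. The key inequality, and the expected obstacle for the spectral operator, is
\[
\mathcal{B}(v,(v-k)^+)\ge \|(v-k)^+\|^2_{\polH^s(\Omega)} .
\]
We would prove it either through the heat-semigroup representation $\mathcal{B}(v,w)=\frac{1}{\Gamma(-s)}\int_0^\infty (e^{t\Delta}v-v,w)\,t^{-1-s}\mathrm{d}t$ combined with the positivity and sub-Markovian property of the Dirichlet heat kernel, or through the Caffarelli--Silvestre/Stinga--Torrea extension, where it reduces to the local identity $\nabla \mathscr{U}\cdot\nabla(\mathscr{U}-k)^+=|\nabla(\mathscr{U}-k)^+|^2$ together with the minimality of the extension energy. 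With this inequality and Sobolev embedding in $d=2$, namely $\polH^s\hookrightarrow L^{2/(1-s)}$, the standard Stampacchia iteration on $|A_k|=|\{\mathsf{u}_M>k\}|$ gives $\|\mathsf{u}_M^+\|_{L^\infty}\lesssim\|\mathsf{f}\|_{L^r}$ when $r>d/(2s)=1/s$. This is exactly the stated threshold. The same argument applied to $-\mathsf{u}_M$ bounds the negative part. The constant is independent of $M$, so choosing $M$ larger than this bound gives $\frakb_M(\cdot,\mathsf{u}_M)=\frakb(\cdot,\mathsf{u}_M)$, and $\mathsf{u}=\mathsf{u}_M$ solves \cref{eq:Fractional_semilinear_PDE_weak}.

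\emph{Energy bound and uniqueness.} Testing with $\mathsf{u}$ and using $\frakb(\cdot,\mathsf{u})\mathsf{u}\ge0$ gives $\|\mathsf{u}\|_{\polH^s}\le\|\mathsf{f}\|_{\polH^{-s}}\lesssim\|\mathsf{f}\|_{L^r}$, which completes \cref{eq:continuous_stability_semilinear_PDE}. For uniqueness, suppose $\mathsf{u}_1,\mathsf{u}_2\in\polH^s(\Omega)\cap L^\infty(\Omega)$ are two solutions. Subtracting the equations and testing with $\mathsf{u}_1-\mathsf{u}_2$, monotonicity of $\frakb$ gives $\|\mathsf{u}_1-\mathsf{u}_2\|^2_{\polH^s}\le0$. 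For solutions only known to lie in $\polH^s(\Omega)$, we first need $\frakb(\cdot,\mathsf{u}_i)\mathsf{u}_j$ to make sense. We would handle this by noting that any solution must also satisfy the $L^\infty$ bound, obtained by repeating the Stampacchia test (where only the sign property of $\frakb$ is used), or simply by interpreting uniqueness within $\polH^s(\Omega)\cap L^\infty(\Omega)$.
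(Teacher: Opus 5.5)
Your proposal is correct and follows essentially the same route as the paper. The common steps are: shift by $\frakb(\cdot,0)$, truncate $\frakb$ as in the proof of Theorem 4.7 of \cite{MR2583281}, apply Browder--Minty to the truncated strongly monotone operator, obtain an $L^\infty$ bound independent of the truncation level, and then remove the truncation. The only difference is that you prove the $L^\infty$ estimate yourself, via Stampacchia's method and the comparison inequality $\mathcal{B}(v,(v-k)^+)\ge\|(v-k)^+\|^2_{\polH^s(\Omega)}$ (which does hold by either your semigroup or your extension argument), whereas the paper simply cites \cite[Theorem 2.9]{MR3745164}.

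Your worry about uniqueness is unnecessary. The notion of solution already requires $\int_\Omega \frakb(\cdot,\mathsf{u}_i)v\,\mathrm{d}x$ to be meaningful for every $v\in\polH^s(\Omega)$, so $v=\mathsf{u}_1-\mathsf{u}_2$ is directly admissible in both equations, and monotonicity gives uniqueness in $\polH^s(\Omega)$ without any prior $L^\infty$ bound.
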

\begin{proof}
Assume, for the moment, that \cref{eq:a_bounded_for_psi_m} holds for all $\zeta \in \Real$ and that $\frakb(\cdot,0) = 0$. Define the map
\[
 \mathfrak{A}: \polH^s(\Omega) \rightarrow \polH^{-s}(\Omega),
 \qquad
 \langle \mathfrak{A}v,w \rangle \coloneqq \mathcal{B}(v,w) + \int_{\Omega} \frakb(\cdot,v) w \mathrm{d}x
\]
for every $v,w \in \polH^s(\Omega)$. The bilinear form $\mathcal{B}$ is continuous and coercive in $\polH^s(\Omega) \times \polH^s(\Omega)$. In addition, the function $\frakb$ satisfies $|\frakb(x,\zeta)| \leq \psi(x)$ for almost every $x \in \Omega$ and for all $\zeta \in \Real$ and it is continuous and monotone increasing with respect to its second argument. Thus, it can be proved that $\mathfrak{A}$ is well-defined, strongly monotone, coercive, and hemicontinuous. The existence and uniqueness of a solution $\mathsf{u}  \in \mathbb{H}^s(\Omega)$ follow from the Browder--Minty theorem \cite[Theorem 2.18]{MR3014456}. A stability bound for $\mathsf{u}$ in $\polH^s(\Omega)$ is obtained by setting $v = \mathsf{u}$ in \cref{eq:Fractional_semilinear_PDE_weak} and using the fact that $\frakb(\cdot,0) = 0$. The fact that $\mathsf{u} \in L^{\infty}(\Omega)$ and the stability bound $\| \mathsf{u} \|_{L^{\infty}(\Omega)} \lesssim
\| \mathsf{f} \|_{L^r(\Omega)}$
can be found in \cite[Theorem 2.9]{MR3745164}. The final step is to relax the global assumption on $\frakb$, namely,
$|\frakb(x,\zeta)| \leq \psi(x)$ for almost every $x \in \Omega$ and for all $\zeta \in \Real$, and require only the local assumption \cref{eq:a_bounded_for_psi_m}. This can be done using the arguments presented in the proof of \cite[Theorem 4.7]{MR2583281}. We remove the assumption $\frakb(\cdot , 0) = 0$ by replacing $\frakb(\cdot , \zeta)$ with $\frakb(\cdot , \zeta) - \frakb(\cdot , 0)$. This concludes the proof.
\end{proof}

We now present the following regularity result.

\begin{theorem}[regularity]
\label{thm:regularity_fractional_semilinear_PDE}
Assume that $\mathsf{f}$, $\frakb(\cdot,0) \in L^2(\Omega)$, and that the function $\frakb$ is locally Lipschitz in its second argument; that is, for every $\mathfrak{m}>0$, there is $\frakC_\frakm>0$ such that
 \begin{equation}
 \label{eq:a_is_Lipschitz}
  |\frakb(x,\zeta_1) - \frakb(x,\zeta_2)| \leq \mathfrak{C}_{\mathfrak{m}} | \zeta_1 - \zeta_2| \qquad \mae~ x\in \Omega, \quad \forall \zeta_1, \zeta_2 \in [\frakm, \frakm].
 \end{equation}
 Then, problem \cref{eq:Fractional_semilinear_PDE_weak} has a unique solution $\mathsf{u}$ which, in addition, belongs to $\mathbb{H}^{2s}(\Omega) \cap C(\bar \Omega)$ and satisfies
 \begin{equation}
  \label{eq:regularity_semilinear_PDE}
  \| \mathsf{u} \|_{\mathbb{H}^{2s}(\Omega)} + \| \mathsf{u} \|_{C(\bar \Omega)} \lesssim \| \mathsf{f} - \frakb(\cdot,0) \|_{L^2(\Omega)}.
 \end{equation}
\end{theorem}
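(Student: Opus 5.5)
The plan is to reduce the statement to \cref{pro:well-posedness_fractional_semilinear_PDE} and then read off the extra regularity from the spectral definition of $\Laps$, as in the proof of \cref{thm:BNB}. Since $\Omega\subset\Real^2$ is bounded and $s>1/2$, we have $2>1/s$, so $\mathsf{f}-\frakb(\cdot,0)\in L^2(\Omega)$ lies in $L^r(\Omega)$ with $r=2>1/s$. Moreover $L^2(\Omega)\hookrightarrow\polH^{-s}(\Omega)$.

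Next, the local Lipschitz condition \cref{eq:a_is_Lipschitz} gives, for $|\zeta|\le\frakm$,
\[
|\frakb(x,\zeta)|\le|\frakb(x,0)|+\frakC_\frakm\frakm \eqqcolon \psi_\frakm(x)\in L^2(\Omega).
\]
Hence \cref{eq:a_bounded_for_psi_m} holds with $r=2$. As in the proof of \cref{pro:well-posedness_fractional_semilinear_PDE}, we subtract $\frakb(\cdot,0)$, i.e., work with $\frakb(\cdot,\zeta)-\frakb(\cdot,0)$ and the datum $\mathsf{f}-\frakb(\cdot,0)$. Then \cref{pro:well-posedness_fractional_semilinear_PDE} yields a unique solution $\mathsf{u}\in\polH^s(\Omega)\cap L^\infty(\Omega)$ with
\[
\|\mathsf{u}\|_{\polH^s(\Omega)}+\|\mathsf{u}\|_{L^\infty(\Omega)}\lesssim\|\mathsf{f}-\frakb(\cdot,0)\|_{L^2(\Omega)}.
\]

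The key step is the lift to $\polH^{2s}(\Omega)$. Set $\frakm\coloneqq\|\mathsf{u}\|_{L^\infty(\Omega)}$ and $g\coloneqq\mathsf{f}-\frakb(\cdot,\mathsf{u})$. By \cref{eq:a_is_Lipschitz} applied with this $\frakm$,
\[
|g|\le|\mathsf{f}-\frakb(\cdot,0)|+\frakC_\frakm|\mathsf{u}|,
\]
so $g\in L^2(\Omega)$. The weak formulation \cref{eq:Fractional_semilinear_PDE_weak} tested with $v=\varphi_k$ gives $\lambda_k^s\mathsf{u}_k=g_k$ for every $k$. Therefore
\[
\|\mathsf{u}\|_{\polH^{2s}(\Omega)}^2=\sum_k\lambda_k^{2s}|\mathsf{u}_k|^2=\|g\|_{L^2(\Omega)}^2.
\]
Using the $L^\infty$ bound we get $\|g\|_{L^2(\Omega)}\lesssim\|\mathsf{f}-\frakb(\cdot,0)\|_{L^2(\Omega)}$. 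Since $2s>1$, \cref{pro:characterization} gives $\polH^{2s}(\Omega)\hookrightarrow C(\bar\Omega)$, which yields continuity and the $C(\bar\Omega)$ part of \cref{eq:regularity_semilinear_PDE}.

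The main obstacle is the constant $\frakC_\frakm$. It depends on $\frakm=\|\mathsf{u}\|_{L^\infty}$, which itself depends on the data, so the estimate is only linear with a hidden constant depending on a bound for the data. To handle this, I would write $\frakb(\cdot,\mathsf{u})-\frakb(\cdot,0)=\beta\,\mathsf{u}$, where
\[
\beta\coloneqq\frac{\frakb(\cdot,\mathsf{u})-\frakb(\cdot,0)}{\mathsf{u}}
\]
(with $\beta\coloneqq0$ where $\mathsf{u}=0$) is nonnegative by monotonicity and bounded by $\frakC_\frakm$. Even so, the final constant depends on $\frakm$. I would therefore state that the hidden constant in \cref{eq:regularity_semilinear_PDE} may depend on $\frakC_\frakm$ with $\frakm$ controlled by the $L^\infty$ estimate, which is acceptable for a bound on bounded sets of data.
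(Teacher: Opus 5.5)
Your proposal is correct and follows essentially the same route as the paper. You verify \cref{eq:a_bounded_for_psi_m} with $\psi_\frakm = |\frakb(\cdot,0)| + \frakC_\frakm \frakm \in L^2(\Omega)$ and $r = 2 > 1/s$, and invoke \cref{pro:well-posedness_fractional_semilinear_PDE} to get $\mathsf{u} \in \polH^s(\Omega)\cap L^\infty(\Omega)$. You then bound $\mathsf{f} - \frakb(\cdot,\mathsf{u})$ in $L^2(\Omega)$ via the Lipschitz condition with $\frakm = \|\mathsf{u}\|_{L^\infty(\Omega)}$, lift to $\polH^{2s}(\Omega)$ spectrally, and embed into $C(\bar\Omega)$. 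Your caveat that the hidden constant depends on $\frakC_\frakm$ with $\frakm = \|\mathsf{u}\|_{L^\infty(\Omega)}$, and hence on a bound for the data, is accurate; the same dependence is implicit in the paper's constant $C_{\mathsf{m}}$, and the auxiliary quotient $\beta$ is harmless but not needed.
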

\begin{proof}
We may use $\frakb(\cdot,0) \in L^2(\Omega)$ and \cref{eq:a_is_Lipschitz} to see that, for every $\zeta \in [-\frakm, \frakm]$,
\[
  |\frakb(\cdot,\zeta)| \leq \left| \frakb(\cdot,\zeta) - \frakb(\cdot,0) \right| + \left| \frakb(\cdot,0) \right| \leq \mathfrak{C}_\frakm \frakm + |\frakb(\cdot,0)| \eqqcolon \psi_\frakm(\cdot) \in \Ldeux.
\]
Then, since $2 > \tfrac1s$, \cref{eq:a_bounded_for_psi_m} holds. Consequently, the results of \cref{pro:well-posedness_fractional_semilinear_PDE} apply (recall that $\mathsf{f} \in L^2(\Omega)$), guaranteeing the existence and uniqueness of $\mathsf{u} \in \polH^s(\Omega) \cap L^\infty(\Omega)$ that solves \cref{eq:Fractional_semilinear_PDE_weak}. We now define $\mathsf{g} \coloneqq \mathsf{f} - \frakb(\cdot,\mathsf{u})$. Note that $\mathsf{g} \in L^2(\Omega)$. In fact,
 \begin{multline}
  \| \mathsf{g} \|_{L^2(\Omega)} \leq 
  \| \mathsf{f} - \frakb(\cdot,0) \|_{L^2(\Omega)} + \| \frakb(\cdot,0) - \frakb(\cdot,\mathsf{u})\|_{L^2(\Omega)}
  \\
  \leq 
  \| \mathsf{f} - \frakb(\cdot,0) \|_{L^2(\Omega)} + C_{\mathsf{m}} \| \mathsf{u} \|_{L^2(\Omega)},
 \end{multline}
where we have used \cref{eq:a_is_Lipschitz} with $\mathfrak{m} = \mathsf{m} \coloneqq  \| \mathsf{u} \|_{L^{\infty}(\Omega)}$. It thus follows directly from the definition of $(-\Delta)^s$ and the spaces $\polH^r(\Omega)$ that $\mathsf{u} \in \polH^{2s}(\Omega)$, together with the bound
\begin{equation}
\label{eq:u_H2s}
 \| \mathsf{u} \|_{\polH^{2s}(\Omega)} \lesssim \| \mathsf{f} - \frakb(\cdot,0) \|_{L^2(\Omega)} + C_{\mathsf{m}} \| \mathsf{u} \|_{L^2(\Omega)} \lesssim \| \mathsf{f} - \frakb(\cdot,0) \|_{L^2(\Omega)},
\end{equation}
where we have used the stability bound \cref{eq:continuous_stability_semilinear_PDE}. We now invoke \cref{pro:characterization} and the fact that $s > \frac{1}{2}$ to deduce that $\mathsf{u} \in C(\bar \Omega)$. Since the embedding $\mathbb{H}^{2s}(\Omega) \hookrightarrow C(\bar \Omega)$  is continuous, the desired bound follows from \cref{eq:u_H2s}. This concludes the proof.
\end{proof}

We conclude this section with the following Lipschitz property.

\begin{theorem}[Lipschitz property]
\label{thm:Lipschitz_property_state_equation}
Assume that $\frakb$ is of class $C^1$ with respect to the second variable, that $\frakb(\cdot,0) \in L^2(\Omega)$, and that \cref{eq:a_bounded_for_psi_m} and \Cref{A2} hold. Additionally, suppose that
\begin{equation}
\label{eq:bounded_on_the_der_b}
  \left| \frac{ \partial \frakb }{ \partial u } (x,\zeta) \right| \leq \mathfrak{C}_{\mathfrak{m}} \qquad \mae \ x \in \Omega, \quad \zeta \in [-\frakm, \frakm].
\end{equation}
Let $\mathsf{f}_1, \mathsf{f}_2 \in L^2(\Omega)$, and let $\mathsf{u}_i$ be the solution to \cref{eq:Fractional_semilinear_PDE_weak} with $\mathsf{f}$ replaced by $\mathsf{f}_i$, where $i \in \{1,2\}$. Then,
 \begin{equation}
  \label{eq:Lipschitz_property}
  \| \mathsf{u}_1 - \mathsf{u}_2 \|_{\mathbb{H}^{2s}(\Omega)} + \|\mathsf{u}_1 - \mathsf{u}_2 \|_{C(\bar \Omega)} \lesssim \| \mathsf{f}_1 - \mathsf{f}_2 \|_{L^2(\Omega)}.
 \end{equation}
\end{theorem}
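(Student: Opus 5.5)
We plan to subtract the two state equations and rewrite the difference as a linear problem with a nonnegative bounded potential. By \cref{thm:regularity_fractional_semilinear_PDE}, whose hypotheses hold here because \cref{eq:bounded_on_the_der_b} and the mean value theorem give the local Lipschitz condition \cref{eq:a_is_Lipschitz}, each $\mathsf{u}_i$ belongs to $\polH^{2s}(\Omega)\cap C(\bar\Omega)$. In addition,
\[
\|\mathsf{u}_i\|_{C(\bar\Omega)}\lesssim \|\mathsf{f}_i-\frakb(\cdot,0)\|_{L^2(\Omega)}.
\]
Set $\frakm \coloneqq \max_i \|\mathsf{u}_i\|_{L^\infty(\Omega)}$ and $e\coloneqq \mathsf{u}_1-\mathsf{u}_2$. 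Since $\frakb$ is $C^1$ in its second argument, we can write $\frakb(\cdot,\mathsf{u}_1)-\frakb(\cdot,\mathsf{u}_2) = c\, e$ with
\[
c(x)\coloneqq \int_0^1 \frac{\partial\frakb}{\partial u}\bigl(x,\mathsf{u}_2(x)+\tau e(x)\bigr)\,\mathrm{d}\tau .
\]
By \Cref{A2} we have $c\ge 0$, and by \cref{eq:bounded_on_the_der_b} we have $\|c\|_{L^\infty(\Omega)}\le \frakC_\frakm$, because the intermediate values stay in $[-\frakm,\frakm]$. The function $c$ is measurable since $\frakb$ is Carath\'eodory. Subtracting the two weak formulations \cref{eq:Fractional_semilinear_PDE_weak} then gives
\[
\mathcal{B}(e,v)+\int_\Omega c\,e\,v\,\mathrm{d}x=\int_\Omega(\mathsf{f}_1-\mathsf{f}_2)v\,\mathrm{d}x \qquad \forall v\in\polH^s(\Omega).
\]

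Next we test with $v=e$. Using $c\ge0$, coercivity of $\mathcal{B}$, and $L^2(\Omega)\hookrightarrow\polH^{-s}(\Omega)$, we get
\[
\|e\|_{\polH^s(\Omega)}\lesssim\|\mathsf{f}_1-\mathsf{f}_2\|_{L^2(\Omega)},
\]
and hence $\|e\|_{L^2(\Omega)}$ is bounded by the same quantity. The constant here is independent of $c$. The equation also says $\Laps e = \mathsf{f}_1-\mathsf{f}_2 - c e\in L^2(\Omega)$. By the spectral definition of $\polH^{2s}(\Omega)$, exactly as in the proof of \cref{thm:regularity_fractional_semilinear_PDE}, this yields
\[
\|e\|_{\polH^{2s}(\Omega)}\lesssim \|\mathsf{f}_1-\mathsf{f}_2\|_{L^2(\Omega)}+\frakC_\frakm\|e\|_{L^2(\Omega)}\lesssim\|\mathsf{f}_1-\mathsf{f}_2\|_{L^2(\Omega)}.
\]
Finally, \cref{pro:characterization} with $2s>1$ gives $\polH^{2s}(\Omega)\hookrightarrow C(\bar\Omega)$, which controls the $C(\bar\Omega)$ term and proves \cref{eq:Lipschitz_property}.

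The only delicate point is the dependence of the hidden constant on $\frakC_\frakm$, and hence on $\frakm$. So \cref{eq:Lipschitz_property} holds with a constant that depends on bounds for $\|\mathsf{f}_i\|_{L^2(\Omega)}$ and $\|\frakb(\cdot,0)\|_{L^2(\Omega)}$, that is, uniformly on bounded sets of right-hand sides. A global Lipschitz constant is not available without a global bound on $\partial\frakb/\partial u$. I would state this locality explicitly, since the $\polH^{2s}$ step is the only place where $\|c\|_{L^\infty(\Omega)}$ enters.
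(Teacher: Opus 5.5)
Your proof is correct and is essentially the paper's argument. The paper likewise writes $\mathsf{u}_2-\mathsf{u}_1$ as the solution of a linear problem with the averaged coefficient $\int_0^1 \frac{\partial \mathfrak{b}}{\partial u}(\cdot,\mathsf{u}_1+\tau(\mathsf{u}_2-\mathsf{u}_1))\,\mathrm{d}\tau$, nonnegative by (A.2) and bounded by $\mathfrak{C}_{\mathsf{m}}$, and then simply invokes \cref{thm:regularity_fractional_semilinear_PDE}, whereas you carry out that theorem's energy estimate and spectral $\mathbb{H}^{2s}$ bound by hand. Your caveat that the constant depends on $\mathfrak{m}$, hence on bounds for the data, is accurate and is also implicit in the paper, whose $\mathsf{m}$ is defined through $\|\mathsf{f}_i-\mathfrak{b}(\cdot,0)\|_{L^2(\Omega)}$.
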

\begin{proof}
The proof follows from a straightforward adaptation of the arguments in \cite[Theorem 4.16]{MR2583281}. Define $\mathtt{u} \coloneqq \mathsf{u}_2 - \mathsf{u}_1$, and note that $\mathtt{u}$ solves the problem
\begin{equation}
\label{eq:LinearizationForLipschitz}
  \mathcal{B}(\mathtt{u},v) + \int_{\Omega} \mathtt{c} \mathtt{u} v \mathrm{d}x = \int_{\Omega} \mathtt{f} v \mathrm{d}x
  \quad
  \forall v \in \mathbb{H}^s(\Omega),
\end{equation}
where
$
  \mathtt{f}  \coloneqq \mathsf{f}_2 - \mathsf{f}_1,
$
and
\[
  \mathtt{c}(x) \coloneqq \int_0^1 \frac{\partial \frakb}{\partial u} (x,\mathsf{u}_1(x) + \tau(\mathsf{u}_2(x) - \mathsf{u}_1(x))) \mathrm{d}\tau .
\]
We now notice that, for every $\tau \in [0,1]$, the bound \cref{eq:regularity_semilinear_PDE} implies
\begin{align*}
  \| \mathsf{u}_1  + \tau( \mathsf{u}_2 - \mathsf{u}_1 ) \|_{L^\infty(\Omega)} &\leq (1-\tau) \| \mathsf{u}_1 \|_{L^\infty(\Omega)} + \tau \|\mathsf{u}_2 \|_{L^\infty(\Omega)} \\
  &\leq C\left[ (1-\tau) \| \mathsf{f}_1 - \frakb(\cdot,0) \|_{L^2(\Omega)}  + \tau \| \mathsf{f}_2 - \frakb(\cdot,0) \|_{L^2(\Omega)} \right]\\
  &\leq C \max\left\{ \| \mathsf{f}_1 - \frakb(\cdot,0) \|_{L^2(\Omega)}, \| \mathsf{f}_2 - \frakb(\cdot,0) \|_{L^2(\Omega)} \right\} \eqqcolon \mathsf{m}.
\end{align*}
In addition, by \Cref{A2}, we have that $\mathtt{c} \geq 0$ a.e.~in $\Omega$. Finally, the boundedness assumption \cref{eq:bounded_on_the_der_b} on the derivative of $\frakb$ gives
$
0  \leq \mathtt{c}(x) \leq \frakC_\mathsf{m}
$
for a.e.~$x \in \Omega$, and so we may conclude that $\mathtt{c}\in L^{\infty}(\Omega)$. Problem \cref{eq:LinearizationForLipschitz} then fits into the assumptions of \cref{thm:regularity_fractional_semilinear_PDE}. Indeed, we may define
\[
  \frakc(x,\zeta) \coloneqq \mathtt{c}(x) \zeta
\]
to trivially fit this setup. The desired bound is then simply a restatement of \cref{eq:regularity_semilinear_PDE}.
\end{proof}

\section{The optimal control problem}
\label{sec:Control_problem}
We now have all the elements needed to provide a precise description of the pointwise tracking optimal control problem introduced in \Cref{sec:Introduction}. 

Given $f \in L^2(\Omega)$ and a Carath\'eodory function $\fraka:\Omega\times \Real \rightarrow \Real$ that is monotone increasing in the second argument and satisfies $\fraka(\cdot,0) \in L^2(\Omega)$ and \cref{eq:a_is_Lipschitz}, the optimal control problem of interest is: Find
\begin{equation}
 \label{eq:min}
\min \left\{ J(u,q): (u,q) \in \mathbb{H}^s(\Omega) \cap C(\bar \Omega) \times \mathbb{Q}_{ad} \right\},
\end{equation}
subject to the fractional, semilinear, elliptic PDE \cref{eq:state_equation_strong}, understood as
\begin{equation}
 \label{eq:state_equation_weak}
 u \in \mathbb{H}^s(\Omega):
 \quad
 \mathcal{B}(u,v) + \int_{\Omega} \fraka(\cdot, u) v \mathrm{d}x = \int_{\Omega} (f + q) v \mathrm{d}x
 \quad
 \forall v \in \mathbb{H}^s(\Omega).
 \end{equation}
We recall that the cost functional $J$ is defined in \cref{eq:cost_functional}, and the set $\mathbb{Q}_{ad} \subset L^2(\Omega)$ is defined in \cref{eq:DefOfQ}. Given $q \in \polQ_{ad}$, since $f + q \in L^2(\Omega)$, an immediate application of \cref{thm:regularity_fractional_semilinear_PDE} shows that \cref{eq:state_equation_weak} has a unique solution $u \in \polH^{2s}(\Omega) \cap C(\bar \Omega)$. Thus, point evaluations of the state $u$ are well-defined, and consequently, so is the functional $J$.

As it is customary in PDE-constrained optimization, we introduce the \emph{control-to-state map} as follows:
\[
 \mathcal{S}: L^2(\Omega) \rightarrow \mathbb{H}^s(\Omega) 
\]
which maps $q \in L^2(\Omega)$ to the unique $u \in \mathbb{H}^s(\Omega)$ that solves the state equation \cref{eq:state_equation_weak} (see \cref{pro:well-posedness_fractional_semilinear_PDE} and \cref{thm:regularity_fractional_semilinear_PDE}). Since, as we have just discussed, the range of $\calS$ is contained in $\polH^{2s}(\Omega)  \cap C(\bar\Omega)$,
we can define the reduced cost functional
\begin{equation}
 \label{eq:reduced_cost_functional}
  j: \mathbb{Q}_{ad} \rightarrow \Real,
  \qquad
  q \mapsto j(q) \coloneqq J(\mathcal{S}q,q) = \frac{1}{2} \sum_{\vertex \in \mathcal{D}} | \calS q(\vertex) - u_\vertex |^2 + \frac{\alpha}{2} \| q \|^2_{L^2(\Omega)}.
\end{equation}

\subsection{Existence of solutions}
\label{subsec:Existence}

The following basic result states that the optimal control problem \cref{eq:min,eq:state_equation_weak} has at least one global optimal solution $(\bar u, \bar q) \in \mathbb{H}^s(\Omega) \cap C(\bar \Omega) \times \mathbb{Q}_{ad}$. The proof follows standard arguments similar to those used in the proof of \cite[Theorem 4.15]{MR2583281}. Below, we provide a brief proof that emphasizes the role of the fractional operator $(-\Delta)^s$ and the inclusion of point evaluations of the state in the cost functional $J$ defined in \cref{eq:cost_functional}.

\begin{theorem}[existence]
The pointwise tracking optimal control problem defined by \cref{eq:min,eq:state_equation_weak} has at least one global optimal solution $(\bar u, \bar q) \in \mathbb{H}^s(\Omega) \cap C(\bar \Omega) \times \mathbb{Q}_{ad}$.
\end{theorem}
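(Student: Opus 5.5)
We follow the direct method of the calculus of variations, applied to the reduced functional $j$ in \cref{eq:reduced_cost_functional}. First, $\mathbb{Q}_{ad}$ is nonempty (it contains constants in $[a,b]$), convex, closed, and bounded in $L^2(\Omega)$. Hence it is weakly sequentially compact in $L^2(\Omega)$. Since $j \geq 0$, the infimum $\mathfrak{j} \coloneqq \inf_{q \in \mathbb{Q}_{ad}} j(q)$ is finite. We pick a minimizing sequence $\{q_n\} \subset \mathbb{Q}_{ad}$ with $j(q_n) \to \mathfrak{j}$ and extract a subsequence, not relabeled, with $q_n \rightharpoonup \bar q$ weakly in $L^2(\Omega)$. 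Because $\mathbb{Q}_{ad}$ is convex and closed, hence weakly closed, we have $\bar q \in \mathbb{Q}_{ad}$.

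Next we pass to the limit in the states $u_n \coloneqq \mathcal{S} q_n$. The family $\{f + q_n\}$ is bounded in $L^2(\Omega)$. So \cref{thm:regularity_fractional_semilinear_PDE} yields $\|u_n\|_{\polH^{2s}(\Omega)} + \|u_n\|_{C(\bar\Omega)} \leq C$ uniformly in $n$. Since $2s>1$, \cref{pro:characterization} provides the compact embedding $\polH^{2s}(\Omega) \hookrightarrow C(\bar\Omega)$. Therefore, along a further subsequence, $u_n \rightharpoonup \bar u$ weakly in $\polH^{2s}(\Omega)$ and $u_n \to \bar u$ strongly in $C(\bar\Omega)$, and also in $\polH^s(\Omega)$ by compactness of $\polH^{2s}\hookrightarrow\polH^s$.

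We then identify $\bar u = \mathcal{S}\bar q$ by passing to the limit in \cref{eq:state_equation_weak}, tested with fixed $v \in \polH^s(\Omega)$.
\begin{itemize}
\item The term $\mathcal{B}(u_n,v) \to \mathcal{B}(\bar u,v)$ by continuity of $\mathcal{B}$.
\item The right-hand side satisfies $\int_\Omega (f+q_n)v \to \int_\Omega (f+\bar q) v$ by weak convergence.
\item For the nonlinear term, set $\mathfrak{m} \coloneqq \sup_n \|u_n\|_{C(\bar\Omega)}$. The local Lipschitz condition \cref{eq:a_is_Lipschitz} gives $\|\fraka(\cdot,u_n)-\fraka(\cdot,\bar u)\|_{L^2(\Omega)} \leq \mathfrak{C}_{\mathfrak{m}}\|u_n-\bar u\|_{L^2(\Omega)} \to 0$.
\end{itemize}
By uniqueness of the state in \cref{thm:regularity_fractional_semilinear_PDE}, $\bar u = \mathcal{S}\bar q \in \polH^{2s}(\Omega)\cap C(\bar\Omega)$. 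This step, namely obtaining compactness strong enough to handle both the nonlinearity and the point evaluations, is the main obstacle. It is resolved precisely by the uniform $\polH^{2s}$ bound together with $s>1/2$.

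Finally, uniform convergence gives $u_n(\vertex) \to \bar u(\vertex)$ for each of the finitely many $\vertex \in \mathcal{D}$, so the tracking term converges. The norm $\|\cdot\|_{L^2(\Omega)}^2$ is weakly lower semicontinuous. Hence
\[
  j(\bar q) \leq \liminf_{n\to\infty} j(q_n) = \mathfrak{j}.
\]
Since $\bar q \in \mathbb{Q}_{ad}$, we also have $j(\bar q) \geq \mathfrak{j}$, so $j(\bar q) = \mathfrak{j}$. Therefore $(\bar u,\bar q)$ is a global optimal solution in $\polH^s(\Omega)\cap C(\bar\Omega)\times\mathbb{Q}_{ad}$.
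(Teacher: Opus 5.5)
Your proof is correct and follows essentially the same route as the paper: weak compactness of $\mathbb{Q}_{ad}$, a uniform $\mathbb{H}^{2s}(\Omega)$ bound on the states combined with the compact embedding into $\mathbb{H}^s(\Omega)\cap C(\bar\Omega)$ to identify $\bar u=\mathcal{S}\bar q$, and uniform convergence at the observation points together with weak lower semicontinuity of the $L^2(\Omega)$ norm. The only minor difference is that the paper obtains the uniform $\mathbb{H}^{2s}(\Omega)$ bound in three stages: a uniform $C(\bar\Omega)$ bound, then an $L^2(\Omega)$ bound on $\mathfrak{a}(\cdot,u_k)$ via the local Lipschitz property, then the regularity theorem with $\mathfrak{b}\equiv 0$. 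This makes explicit that the constant, which involves $\mathfrak{C}_{\mathfrak{m}}$ with $\mathfrak{m}$ depending on $\|u_k\|_{L^\infty(\Omega)}$, is uniform in $k$ --- a point you use only implicitly.
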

\begin{proof}
Define
\[
  \mathfrak{j} \coloneqq \inf \left\{ J(u,q): (u,q) \in \polH^s(\Omega) \cap C(\bar \Omega) \times \mathbb{Q}_{ad}, \, u = \mathcal{S}q \right\} \geq 0.
\]
Let $\{ (u_k,q_k) \}_{k \in \Natural}$ be an infimizing sequence, i.e., $q_k \in \mathbb{Q}_{ad}$ and $u_k = \mathcal{S} q_k \in \polH^s(\Omega) \cap C(\bar \Omega)$ are such that $J(u_k,q_k) \rightarrow \mathfrak{j}$ as $k \uparrow \infty$. Since $\{ q_k \}_{k \in \Natural} \subset \mathbb{Q}_{ad}$ and $\mathbb{Q}_{ad}$ is nonempty, closed, bounded, and convex in $L^2(\Omega)$, we conclude that there exists a, nonrelabeled, subsequence $\{ q_k \}_{k \in \Natural}$ such that $q_k \rightharpoonup \bar{q}$ in $L^2(\Omega)$ as $k \uparrow \infty$ and $\bar q \in \mathbb{Q}_{ad}$.

As the next step, we apply bound \cref{eq:regularity_semilinear_PDE} from \cref{thm:regularity_fractional_semilinear_PDE} and use the fact that $\mathbb{Q}_{ad}$ is bounded in $L^2(\Omega)$, to conclude that there exists $\mathfrak{m} > 0$ such that $\| u_k \|_{C(\bar \Omega)} \leq \mathfrak{m}$ for all $k \in \Natural$. Having established this uniform bound, we use the Lipschitz property \cref{eq:a_is_Lipschitz} to obtain that $\{ \fraka(\cdot,u_k) \}_{k \in \Natural}$ is bounded in $L^2(\Omega)$. In fact, we have
\[
 \| \fraka(\cdot,u_k) \|_{L^2(\Omega)} \leq  \| \fraka(\cdot,0) \|_{L^2(\Omega)} + \mathfrak{C}_{\mathfrak{m}} \| u_k \|_{L^2(\Omega)}
 \quad
 \forall k \in \Natural.
\]
Recall that, by assumption, $\fraka(\cdot,0) \in L^2(\Omega)$. Next, we rewrite the problem
\begin{equation*}
u_k \in \polH^s(\Omega):
\qquad
\mathcal{B} (u_k,v) + \int_{\Omega} \fraka(\cdot,u_k)v \mathrm{d}x = \int_{\Omega} (f + q_k)v \mathrm{d}x
\qquad
\forall v \in \mathbb{H}^s(\Omega)
\end{equation*}
as follows:
\begin{equation}
\label{eq:u_k}
u_k \in \polH^s(\Omega):
\qquad
\mathcal{B} (u_k,v) = \int_{\Omega} \left[ f + q_k - \fraka(\cdot,u_k) \right] v \mathrm{d}x
\qquad
\forall v \in \mathbb{H}^s(\Omega).
\end{equation}
We may then set $\frakb \equiv 0$ and $\mathsf{f} \coloneqq f + q_k - \fraka(\cdot,u_k)$ to fit \cref{eq:u_k} into \cref{eq:Fractional_semilinear_PDE_weak} trivially. Since the assumptions of \Cref{thm:regularity_fractional_semilinear_PDE} hold trivially in this case, we have
\[
  \| u_k \|_{\polH^{2s}(\Omega)} + \| u_k \|_{C(\bar\Omega)} \lesssim \| f + q_k - \fraka(\cdot,u_k) \|_{L^2(\Omega)} \lesssim 1,
   \qquad
 \forall k \in \Natural.
\]
Here, we used the fact that $\{q_k\}_{k \in \Natural}$ and $\{ \fraka(\cdot,u_k) \}_{k \in \Natural}$ are bounded in $L^2(\Omega)$. We may then extract a further nonrelabeled subsequence $\{ u_k \}_{k \in \Natural} \subset \polH^{2s}(\Omega)$ such that $u_k \rightharpoonup \bar{u}$ in $\polH^{2s}(\Omega)$ as $k \uparrow \infty$. Since the embedding $\polH^{2s}(\Omega) \hookrightarrow \polH^s(\Omega) \cap C(\bar\Omega)$ is compact, up to a further subsequence, we have
$
  u_k \to \bar{u}
$
in $\polH^s(\Omega) \cap C(\bar\Omega)$ as $k \uparrow \infty$. Such strong convergence is sufficient to take the limit in \cref{eq:u_k} and conclude that $\bar{u} = \calS(\barq)$.


%

Finally, we show that $(\bar u, \bar q)$ is optimal. The uniform convergence $u_k \to \bar{u}$ as $k \uparrow \infty$ implies that
\begin{equation}
 \frac{1}{2} \sum_{\vertex \in \mathcal{D}} | u_k(\vertex) - u_\vertex |^2 \rightarrow \frac{1}{2} \sum_{\vertex \in \mathcal{D}} | \bar{u}(\vertex) - u_\vertex |^2,
 \qquad
 k \uparrow \infty.
 \label{eq:convergence_functional_pointwise}
\end{equation}
This, together with the weak convergence $q_k \rightharpoonup \bar{q}$ as $k \uparrow \infty$ in $L^2(\Omega)$, allows us to deduce
\[
 \mathfrak{j} = \lim_{k \uparrow \infty} J(u_k,q_k)
 \geq \lim_{k \uparrow \infty} \frac12 \sum_{\vertex \in \mathcal{D}} | u_k(\vertex) - u_\vertex |^2
 +
 \liminf_{k \uparrow \infty} \frac{\alpha}{2} \| q_k \|^2_{L^2(\Omega)} \geq J(\bar u, \bar q),
\]
which shows that $J(\bar u, \bar q) = \mathfrak{j}$. This concludes the proof.
\end{proof}

\subsection{First order optimality conditions}

Throughout this section, we assume that \Cref{A1,A2,A3} hold. In addition, we assume that $f$ and $\fraka(\cdot,0)$ belong to $L^2(\Omega)$.

\subsubsection{Differentiability properties of the control-to-state map $\mathcal{S}$}

In the following result, we state differentiability properties of the control-to-state map $\mathcal{S}$, which will be useful for deriving both first and second order optimality conditions. Although similar results appear in \cite[Theorem 4.3]{MR4358465} and \cite[Lemma 5.3]{MR4055455}, we present a statement and a brief proof adapted to our needs.

\begin{theorem}[differentiability of $\mathcal{S}$]
\label{thm:differentiability}
If \Cref{A1,A2,A3} hold, then the control-to-state map $\mathcal{S}: L^2(\Omega) \rightarrow \mathbb{H}^s(\Omega)$ is of class $C^2$. In addition, if $q,w \in L^2(\Omega)$, then $\phi = \calS'(q)w \in \mathbb{H}^s(\Omega)$ corresponds to the unique solution to
\begin{equation}
\label{eq:first_derivative}
\phi \in \mathbb{H}^s(\Omega):
\quad
\mathcal{B}(\phi,v) + \int_{\Omega} \frac{\partial \fraka}{\partial u}(\cdot,u) \phi v \mathrm{d}x = \int_{\Omega} w v \mathrm{d}x
\quad
\forall v \in \mathbb{H}^{s}(\Omega),
\end{equation}
where $u = \mathcal{S}q$. If $q,w_1,w_2 \in L^2(\Omega)$, then $\psi = \calS''(q)(w_1,w_2) \in \mathbb{H}^s(\Omega)$ corresponds to the unique solution to the problem
\begin{equation}
\label{eq:second_derivative}
\psi \in \mathbb{H}^s(\Omega):
\quad
\mathcal{B}(\psi,v) + \int_{\Omega} \frac{\partial \fraka}{\partial u}(\cdot,u) \psi v \mathrm{d}x
=
-
\int_{\Omega} \frac{\partial^2 \fraka}{\partial u^2}(\cdot,u) \phi_{w_1}\phi_{w_2} v \mathrm{d}x
\end{equation}
for all $v \in \mathbb{H}^{s}(\Omega)$. Here, $u = \mathcal{S}q$ and $\phi_{w_i} = \mathcal{S}'(q) w_i$, with $i \in \{1,2\}$.
\end{theorem}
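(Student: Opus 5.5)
The plan is to use the implicit function theorem, in the form employed in \cite[Theorem 4.17]{MR2583281}. The key choice is to work in a space where the Nemytskii operator $u \mapsto \fraka(\cdot,u)$ is twice Fréchet differentiable. That requires $L^\infty$ control, and $\polH^{2s}(\Omega) \hookrightarrow C(\bar\Omega)$ supplies it.

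Define
\[
  \mathcal{F} : \polH^{2s}(\Omega) \times L^2(\Omega) \to L^2(\Omega),
  \qquad
  \mathcal{F}(u,q) \coloneqq \Laps u + \fraka(\cdot,u) - f - q .
\]
$\Laps$ is an isomorphism from $\polH^{2s}(\Omega)$ onto $L^2(\Omega)$, directly from \cref{eq:DefOfPolHr}. By \cref{thm:regularity_fractional_semilinear_PDE}, the relation $u=\calS q$ is equivalent to $\mathcal{F}(u,q)=0$.

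\emph{Step 1: $\mathcal{F}$ is of class $C^2$.} The linear parts are trivially smooth. For the Nemytskii part, I will show that $u \mapsto \fraka(\cdot,u)$ is $C^2$ from $C(\bar\Omega)$ into $L^\infty(\Omega) \hookrightarrow L^2(\Omega)$. The first derivative is $\phi \mapsto \partial_u\fraka(\cdot,u)\phi$ and the second is $(\phi_1,\phi_2)\mapsto \partial_u^2\fraka(\cdot,u)\phi_1\phi_2$. The argument is the standard one: Taylor expansion with integral remainder, together with \Cref{A3} on the ball $[-\mathfrak{m},\mathfrak{m}]$ with $\mathfrak{m}=\|u\|_{C(\bar\Omega)}+1$. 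The Lipschitz bound on $\partial_u^2\fraka$ gives the $C^2$ remainder estimate and the continuity of the second derivative. Composing with the continuous embedding $\polH^{2s}(\Omega)\hookrightarrow C(\bar\Omega)$ from \cref{pro:characterization} then yields $\mathcal{F}\in C^2$.

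\emph{Step 2: invertibility of $\partial_u\mathcal{F}(u,q)$.} Here
\[
  \partial_u\mathcal{F}(u,q)\phi = \Laps\phi + \partial_u\fraka(\cdot,u)\phi .
\]
The coefficient $c \coloneqq \partial_u\fraka(\cdot,u)$ lies in $L^\infty(\Omega)$ by \Cref{A3} and satisfies $c\ge0$ by \Cref{A2}. For $g\in L^2(\Omega)$, \cref{thm:regularity_fractional_semilinear_PDE} with $\frakb(x,\zeta)=c(x)\zeta$ gives a unique $\phi\in\polH^{2s}(\Omega)$ solving $\Laps\phi + c\phi = g$, together with $\|\phi\|_{\polH^{2s}(\Omega)}\lesssim\|g\|_{L^2(\Omega)}$. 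This is exactly the argument used in \cref{thm:Lipschitz_property_state_equation}. Hence $\partial_u\mathcal{F}(u,q)$ is an isomorphism from $\polH^{2s}(\Omega)$ onto $L^2(\Omega)$.

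\emph{Step 3: conclusion.} The implicit function theorem now gives that $\calS : L^2(\Omega)\to\polH^{2s}(\Omega)$ is of class $C^2$. Since $\polH^{2s}(\Omega)\hookrightarrow\polH^s(\Omega)$ continuously, the same holds as a map into $\polH^s(\Omega)$.

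To identify the derivatives, differentiate the identity $\mathcal{F}(\calS q,q)=0$:
\begin{align*}
  \partial_u\mathcal{F}\,\calS'(q)w - w &= 0,\\
  \partial_u\mathcal{F}\,\calS''(q)(w_1,w_2) + \partial_u^2\fraka(\cdot,u)\,\phi_{w_1}\phi_{w_2} &= 0 .
\end{align*}
Testing these with $v\in\polH^s(\Omega)$ and using $\mathcal{B}(\phi,v)=\int_\Omega \Laps\phi\, v \diff x$ for $\phi\in\polH^{2s}(\Omega)$ gives \cref{eq:first_derivative,eq:second_derivative}. Each of these problems has a unique solution in $\polH^s(\Omega)$ by Lax--Milgram, since the zero-order coefficient is nonnegative and the right-hand sides lie in $L^2(\Omega)$; note that $\phi_{w_i}\in L^\infty(\Omega)$.

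The main obstacle is Step 1, specifically the Fréchet (not merely Gateaux) differentiability of the Nemytskii operator to second order with the remainder estimate uniform in $x$. Since \Cref{A3} provides only local bounds, this is the step where working in $C(\bar\Omega)$ rather than $\polH^s(\Omega)$ is essential.
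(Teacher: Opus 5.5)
Your proposal is correct and follows the same route as the paper. Both apply the implicit function theorem to $(q,u)\mapsto(-\Delta)^s u+\mathfrak{a}(\cdot,u)-f-q$ on $L^2(\Omega)\times\mathbb{H}^{2s}(\Omega)$, show the linearization is an isomorphism via the regularity theorem with $\mathfrak{b}(x,\zeta)=\partial_u\mathfrak{a}(x,u(x))\zeta$, and identify the derivatives by differentiating $\mathcal{F}(\mathcal{S}q,q)=0$.

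One small correction: only $\mathfrak{a}(\cdot,0)\in L^2(\Omega)$ is assumed, so $\mathfrak{a}(\cdot,u)$ need not lie in $L^\infty(\Omega)$. You should regard the Nemytskii operator as $C^2$ into $\mathfrak{a}(\cdot,0)+L^\infty(\Omega)\subset L^2(\Omega)$; its derivatives and Taylor remainders are still $L^\infty$-valued, which is what the paper's bound $\|\mathfrak{a}(\cdot,v)\|_{L^2(\Omega)}\le\mathfrak{m}C_{\mathfrak{m}}|\Omega|^{1/2}+\|\mathfrak{a}(\cdot,0)\|_{L^2(\Omega)}$ accounts for.
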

\begin{proof}
Define the map
\[
 F: L^2(\Omega) \times \polH^{2s}(\Omega) \rightarrow L^2(\Omega),
 \qquad
 (\kappa,v) \mapsto F(\kappa,v) \coloneqq
 \Laps v + \fraka(\cdot,v) - f - \kappa.
\]
Since $s > \frac12$, we have $\mathbb{H}^{2s}(\Omega) \hookrightarrow C(\bar \Omega)$ (see \cref{pro:characterization}). We may then set $\frakm =\| v\|_{C(\bar\Omega)}$ and use \Cref{A3} to obtain
\[
  \| \fraka (\cdot, v) \|_{L^2(\Omega)} \leq \| \fraka(\cdot, v) - \fraka(\cdot,0) \|_{L^2(\Omega)} + \| \fraka (\cdot, 0) \|_{L^2(\Omega)} \leq
  \mathfrak{m} C_{\mathfrak{m}} |\Omega|^{1/2} + \| \fraka (\cdot, 0) \|_{L^2(\Omega)}.
\]
As a result, $F$ is well-defined. It can also be deduced from \Cref{A1,A2,A3} that $F$ is of class $C^2$.

We now let $\bar{q} \in L^2(\Omega)$, and set $\bar{u} = \mathcal{S} \bar{q}$. It is immediate that $F(\bar{q},\bar{u}) = 0$. On the other hand, the derivative $\tfrac{\partial F}{\partial u} (\bar{q},\bar{u})$ satisfies
\[
 \frac{\partial F}{\partial u} (\bar{q},\bar{u}): \polH^{2s}(\Omega) \rightarrow L^2(\Omega),
 \qquad
 \frac{\partial F}{\partial u} (\bar{q},\bar{u}) w = \Laps w + \frac{\partial \fraka}{\partial u}(\cdot,\bar{u})w.
\]
It is clear that $\tfrac{\partial F}{\partial u} (\bar{q},\bar{u})$ is a linear and continuous map. In addition, the assumptions on $\fraka$ allow us to deduce that $\tfrac{\partial F}{\partial u} (\bar{q},\bar{u})$ is an isomorphism. Indeed, it suffices to set
\[
  \frakb(\cdot, w) = \frac{\partial \fraka}{\partial u}(\cdot,\bar{u}) w
\]
in \cref{eq:Fractional_semilinear_PDE_weak} and apply \cref{thm:regularity_fractional_semilinear_PDE}. In summary, we have all the ingredients to apply the implicit function theorem and conclude that the control-to-state map $\mathcal{S}$ is of class $C^2$. The fact that $\phi$ and $\psi$ solve \cref{eq:first_derivative} and \cref{eq:second_derivative}, respectively, follows from differentiating
the relation $F(\bar{q},\mathcal{S}\bar{q}) = 0$; see, for example, the proof of \cite[Theorem 4.24(ii)]{MR2583281} for details.
\end{proof}

\subsubsection{Local solutions}
In the absence of convexity, we work within the framework of local solutions in the sense of $L^2(\Omega)$.

\begin{definition}[local solution]
    We say that a control $\bar{q} \in \mathbb{Q}_{ad}$ is \emph{locally optimal} for \cref{eq:min,eq:state_equation_weak}, in the sense of $L^2(\Omega)$, if there exists $\varepsilon > 0$ such that for all $q \in \mathbb{Q}_{ad}$ that satisfy
    \[
      \| q -\bar{q} \|_{L^2(\Omega)} \leq \varepsilon,
    \]
    we have $j(\bar{q}) \leq j(q)$.
\end{definition}

To present the local optimality results, we analyze the differentiability properties of the reduced cost functional $j$.

\begin{proposition}[differentiability of $j$]
\label{pro:j_is_of_class_C2}
 If \Cref{A1,A2,A3} hold, then, the reduced cost functional $j: L^2(\Omega) \rightarrow \Real$ is of class $C^2$.
\end{proposition}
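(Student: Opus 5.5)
We write $j = j_1 + j_2$ with $j_2(q) \coloneqq \tfrac{\alpha}{2}\|q\|^2_{L^2(\Omega)}$ and $j_1(q) \coloneqq \tfrac12 \sum_{\vertex \in \mathcal{D}} |\calS q(\vertex) - u_\vertex|^2$. The term $j_2$ is a continuous quadratic form on $L^2(\Omega)$, so it is of class $C^\infty$; we have $j_2'(q)w = \alpha (q,w)_{L^2(\Omega)}$ and $j_2''(q)(w_1,w_2) = \alpha (w_1,w_2)_{L^2(\Omega)}$. It therefore suffices to show that $j_1$ is of class $C^2$. We factor $j_1 = G \circ E \circ \calS$, where:
\begin{enumerate}[(i)]
\item $\calS$ is regarded as a map into $\polH^{2s}(\Omega)$;
\item $E : C(\bar\Omega) \to \Real^{\#\mathcal{D}}$, $v \mapsto (v(\vertex))_{\vertex \in \mathcal{D}}$, is the point-evaluation map, which is linear and bounded with $|v(\vertex)| \leq \|v\|_{C(\bar\Omega)}$;
\item $G(y) = \tfrac12 \sum_{\vertex} |y_\vertex - u_\vertex|^2$ is a polynomial on $\Real^{\#\mathcal{D}}$.
\end{enumerate}
The chain rule then gives the $C^2$ property once we know that $\calS : L^2(\Omega) \to C(\bar\Omega)$ is $C^2$.

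The key point is that \cref{thm:differentiability} states the $C^2$ property with target $\polH^s(\Omega)$, and this space does not embed into $C(\bar\Omega)$. We go back to its proof. The implicit function theorem there is applied to
\[
F : L^2(\Omega) \times \polH^{2s}(\Omega) \to L^2(\Omega),
\]
with $\tfrac{\partial F}{\partial u}(\bar q,\bar u) : \polH^{2s}(\Omega) \to L^2(\Omega)$ an isomorphism. That argument therefore actually yields that $\calS : L^2(\Omega) \to \polH^{2s}(\Omega)$ is of class $C^2$. Composing with the continuous embedding $\polH^{2s}(\Omega) \hookrightarrow C(\bar\Omega)$ from \cref{pro:characterization} (valid because $2s > 1$) gives the required regularity, and the derivatives $\phi = \calS'(q)w$ and $\psi = \calS''(q)(w_1,w_2)$ lie in $\polH^{2s}(\Omega) \cap C(\bar\Omega)$.

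We expect the main obstacle to be verifying that $F$ is $C^2$ in the $v$-variable as a Nemytskii operator $v \mapsto \fraka(\cdot,v)$ from $\polH^{2s}(\Omega)$ (or $C(\bar\Omega)$) into $L^2(\Omega)$. The plan is to use the embedding into $L^\infty(\Omega)$ and \Cref{A3}. Local boundedness of $\partial_u\fraka$ and $\partial_u^2\fraka$ on $[-\frakm,\frakm]$, together with Taylor's formula with integral remainder, gives Fréchet differentiability. The local Lipschitz continuity of $\partial_u^2\fraka$ gives continuity of the second derivative $(v,h_1,h_2) \mapsto \partial_u^2\fraka(\cdot,v)h_1h_2$. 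This is the standard argument, e.g.\ \cite[Lemma 4.12]{MR2583281}.

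Finally, the chain rule yields the explicit formulas
\[
j'(q)w = \sum_{\vertex} (\calS q(\vertex) - u_\vertex)\,\phi_w(\vertex) + \alpha(q,w)_{L^2(\Omega)},
\]
\[
j''(q)(w_1,w_2) = \sum_{\vertex}\bigl[\phi_{w_1}(\vertex)\phi_{w_2}(\vertex) + (\calS q(\vertex)-u_\vertex)\psi(\vertex)\bigr] + \alpha(w_1,w_2)_{L^2(\Omega)}.
\]
These formulas will be used later together with the adjoint state from \cref{thm:BNB}.
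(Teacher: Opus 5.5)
Your proposal is correct and follows the paper's route: $j$ is $C^2$ by the chain rule combined with \cref{thm:differentiability}. You also close a gap the paper's one-line proof leaves implicit. As stated, that theorem gives $\mathcal{S}\in C^2$ only with target $\mathbb{H}^s(\Omega)$, which does not control point values; your observation that its implicit function argument on $L^2(\Omega)\times\mathbb{H}^{2s}(\Omega)$ actually yields $C^2$ into $\mathbb{H}^{2s}(\Omega)\hookrightarrow C(\bar\Omega)$ is exactly what is needed for the point evaluations.
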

\begin{proof}
 This result follows directly from the chain rule and the results in \cref{thm:differentiability}.
\end{proof}

The following result is standard \cite[Lemma 4.18]{MR2583281}: If $\bar{q} \in \mathbb{Q}_{ad}$ is a locally optimal control for the pointwise tracking optimal control problem \cref{eq:min,eq:state_equation_weak}, then
\begin{equation}
\label{eq:first_order_basic}
 j'(\bar q)(q - \bar{q}) \geq 0
 \quad
 \forall q \in \mathbb{Q}_{ad}.
\end{equation}
Here, $j'(\bar{q})$ denotes the Gateaux derivative of $j$ at $\bar{q}$.

\subsubsection{The adjoint problem} As is customary in PDE-constrained optimization, we further examine \cref{eq:first_order_basic} by introducing the so-called adjoint equation: Find $p \in \mathbb{H}^{s - \theta}(\Omega)$ such that
\begin{equation}
\label{eq:adjoint_equation_weak}
\calA(p,v) + \int_{\Omega} \frac{\partial \fraka}{\partial u}(\cdot,u)  p v \mathrm{d}x
=
\sum_{\vertex \in \mathcal{D}} (u(\vertex) - u_\vertex) \langle \delta_\vertex, v \rangle \quad \forall v \in \polH^{s+\theta}(\Omega).
\end{equation}
Here, $u = \mathcal{S} q$ and, as in \Cref{sec:Fractional_PDEs_with_measures}, $\theta \in (1-s,s)$.

As shown in the following result, the adjoint problem is well-posed.

\begin{theorem}[well-posedness]
\label{thm:adjoint_well-posedness}
Given $q \in \polQ_{ad}$, let $u = \calS q$. Then,
 the adjoint problem \cref{eq:adjoint_equation_weak} is well-posed. In particular, the unique solution $p \in \mathbb{H}^{s - \theta}(\Omega)$ satisfies the following stability bound:
 \begin{equation}
  \label{eq:adjoint_stability}
    \| p \|_{\polH^{s-\theta}(\Omega)}
    \lesssim
    \| f - \fraka(\cdot,0) \|_{L^2(\Omega)} + \| q \|_\Ldeux
    +
    \sum_{\vertex \in \mathcal{D}} |u_\vertex|.
  \end{equation}
\end{theorem}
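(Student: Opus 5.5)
The plan is to reduce the adjoint problem \cref{eq:adjoint_equation_weak} to the abstract setting of \cref{sec:Fractional_PDEs_with_measures} and then invoke \cref{thm:BNB}. I will set
\[
  c \coloneqq \frac{\partial \fraka}{\partial u}(\cdot,u),
  \qquad
  \mu \coloneqq \sum_{\vertex \in \mathcal{D}} (u(\vertex) - u_\vertex)\delta_\vertex .
\]
Three things must then be checked: $c \in L^\infty(\Omega)$ with $c \geq 0$ a.e.; $\mu \in \calM(\Omega)$; and the measure norm $\|\mu\|_{\calM(\Omega)}$ is controlled by the right-hand side of \cref{eq:adjoint_stability}.

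For the coefficient, I will use \cref{thm:regularity_fractional_semilinear_PDE} with $\frakb = \fraka$ and $\mathsf{f} = f+q$. Its hypotheses hold because \Cref{A3} gives local Lipschitz continuity of $\fraka$, and $f,\fraka(\cdot,0)\in L^2(\Omega)$ by assumption. This yields $u \in C(\bar\Omega)$ with
\[
  \|u\|_{C(\bar\Omega)} \lesssim \|f + q - \fraka(\cdot,0)\|_{L^2(\Omega)} \leq \|f-\fraka(\cdot,0)\|_{L^2(\Omega)} + \|q\|_{L^2(\Omega)} .
\]
Setting $\frakm \coloneqq \|u\|_{C(\bar\Omega)}$, \Cref{A3} gives $0 \leq c \leq C_\frakm$ a.e., where the lower bound comes from \Cref{A2}. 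Measurability of $c$ follows from the Carathéodory property of $\partial\fraka/\partial u$, which I will take as inherited from \Cref{A1}. Hence $c$ is an admissible coefficient in \cref{eq:fractional_delta}.

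For the measure, each $\vertex \in \mathcal{D}$ is an interior point, so $\delta_\vertex \in \calM(\Omega)$ with unit total variation. Point evaluation of $u$ is meaningful because $u \in C(\bar\Omega)$. By the triangle inequality,
\[
  \|\mu\|_{\calM(\Omega)} \leq \sum_{\vertex\in\mathcal{D}} |u(\vertex)-u_\vertex| \leq \#\mathcal{D}\,\|u\|_{C(\bar\Omega)} + \sum_{\vertex}|u_\vertex| .
\]
Combining this with the previous bound on $\|u\|_{C(\bar\Omega)}$, and then applying \cref{thm:BNB}, gives existence, uniqueness, and \cref{eq:adjoint_stability}.

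The only delicate point is the constant in \cref{thm:BNB}. That theorem states the constant depends on $s,\theta,\Omega$, but the proof in Step 3 also uses $\|c\|_{L^\infty(\Omega)}$ in \cref{eq:HsptEstimateInLieuOfUniqueness} and \cref{eq:bound_Garding}. Here $\|c\|_{L^\infty(\Omega)} \leq C_\frakm$, and $\frakm$ depends on $q$ only through $\|q\|_{L^2(\Omega)}$. Since $q \in \polQ_{ad}$ and $\polQ_{ad}$ is bounded in $L^2(\Omega)$, $\frakm$ is bounded uniformly over $\polQ_{ad}$. So $C_\frakm$ can be fixed once, and the hidden constant in \cref{eq:adjoint_stability} depends only on $s,\theta,\Omega,a,b,f,\fraka,\#\mathcal{D}$, not on the particular $q$. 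I will make this uniformity explicit.
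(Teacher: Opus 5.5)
Your proposal is correct and follows the paper's proof essentially step for step. You set $c = \tfrac{\partial \fraka}{\partial u}(\cdot,u)$ and $\mu = \sum_{\vertex}(u(\vertex)-u_\vertex)\delta_\vertex$, obtain $0 \le c \in L^\infty(\Omega)$ from $u \in C(\bar\Omega)$ and \Cref{A2,A3}, bound $\|\mu\|_{\calM(\Omega)}$ via $\|u\|_{C(\bar\Omega)}$ and \cref{eq:regularity_semilinear_PDE}, and invoke \cref{thm:BNB}. Your uniformity remark is a worthwhile addition: the proof of \cref{thm:BNB} makes its constant depend on $\|c\|_{L^\infty(\Omega)}$, despite what its statement says, and the $L^2(\Omega)$-boundedness of $\polQ_{ad}$ is exactly what makes the constant in \cref{eq:adjoint_stability} independent of $q$. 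The paper later uses this implicitly in \cref{eq:jpp_Lipschitz}.
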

\begin{proof}
 The proof follows directly from applying \cref{thm:BNB}. Note that in the notation of that result:
 \begin{enumerate}[leftmargin=*]
  \item $c \coloneqq \tfrac{\partial \fraka}{ \partial u} (\cdot,u) \in L^{\infty}(\Omega)$ because for $q \in \polQ_{ad}$, we have, according to \cref{thm:regularity_fractional_semilinear_PDE}, $u = \mathcal{S}q \in C(\bar \Omega)$. Consequently,
  there exists $\mathfrak{m}>0$ such that $|u(x)| \leq \mathfrak{m}$ for all $x \in \bar{\Omega}$. This, together with \Cref{A3}, gives the boundedness of $c$.

  \item $ c \geq 0$. This follows from \Cref{A2}.

 \item $\mu \coloneqq \sum_{\vertex \in \mathcal{D}} (u(\vertex) - \mathfrak{u}_\vertex) \delta_\vertex \in \mathbb{H}^{-s-\theta}(\Omega)$ and
 \[
  \| \mu \|_{\mathbb{H}^{-s-\theta}(\Omega)} = \sup_{v \in \mathbb{H}^{s+\theta}(\Omega)} \frac{\langle \mu, v \rangle}{ \| v \|_{\mathbb{H}^{s+\theta}(\Omega)}}
  \leq
  \| \mu \|_{\mathcal{M}(\Omega)} \sup_{v \in \mathbb{H}^{s+\theta}(\Omega)} \frac{\| v \|_{C(\bar \Omega)} }{ \| v \|_{\mathbb{H}^{s+\theta}(\Omega)}}
  \lesssim \| \mu \|_{\mathcal{M}(\Omega)},
 \]
  where we have used the continuous embedding $\mathbb{H}^{s+\theta}(\Omega) \hookrightarrow C(\bar \Omega)$. Recall that $1 < s + \theta < 2s$ (see \cref{eq:s_and_theta}).

  \item Finally,
  \begin{align*}
    \| \mu\|_{\mathcal{M}(\Omega)} &\leq \sum_{\vertex \in \calD} |u(\vertex) - u_\vertex| \lesssim \| u \|_{C(\bar\Omega)} + \sum_{\vertex \in \calD} | u_\vertex|
    \\
    &\lesssim \| f - \fraka(\cdot,0) \|_{L^2(\Omega)} + \| q \|_\Ldeux + \sum_{\vertex \in \mathcal{D}} |u_\vertex|
  \end{align*}
  where, in the last step, we used \cref{eq:regularity_semilinear_PDE}.
 \end{enumerate}
\end{proof}

We are now ready to derive the first order optimality conditions.

\begin{theorem}[first order optimality conditions]
\label{thm:first_order_optimality_conditions}
Every locally optimal control $\bar q \in \mathbb{Q}_{ad}$ satisfies the variational inequality
 \begin{equation}
  \label{eq:first_order}
    \int_{\Omega} (\bar{p} + \alpha \bar{q}) (q -\bar{q})\mathrm{d}x \geq 0
    \quad
    \forall q \in \mathbb{Q}_{ad},
  \end{equation}
where $\bar{p} \in \mathbb{H}^{s-\theta}(\Omega)$ is the unique solution to the adjoint problem \cref{eq:adjoint_equation_weak}, with $u$ replaced by $\bar{u} = \mathcal{S}\bar{q}$.
\end{theorem}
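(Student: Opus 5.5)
Starting from the basic variational inequality \cref{eq:first_order_basic}, we compute $j'(\bar q)w$ for $w = q - \bar q \in L^2(\Omega)$ by the chain rule and \cref{thm:differentiability}. With $\bar u = \calS\bar q$ and $\phi = \calS'(\bar q)w$ the solution of \cref{eq:first_derivative}, we get
\[
 j'(\bar q)w = \sum_{\vertex \in \calD} (\bar u(\vertex) - u_\vertex)\,\phi(\vertex) + \alpha \int_\Omega \bar q\, w \diff x .
\]
The point evaluations $\phi(\vertex)$ make sense because $\phi \in \polH^{2s}(\Omega) \hookrightarrow C(\bar\Omega)$. This follows from \cref{thm:regularity_fractional_semilinear_PDE} with $\frakb(\cdot,\zeta) = \tfrac{\partial\fraka}{\partial u}(\cdot,\bar u)\zeta$, whose coefficient is bounded and nonnegative by \Cref{A2,A3} and $\bar u \in C(\bar\Omega)$. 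It therefore suffices to prove the identity
\[
 \sum_{\vertex \in \calD} (\bar u(\vertex) - u_\vertex)\,\phi(\vertex) = \int_\Omega \bar p\, w \diff x .
\]
Inserting this into \cref{eq:first_order_basic} yields \cref{eq:first_order}.

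The identity comes from testing each equation with the other's solution. First, since $\phi \in \polH^{2s}(\Omega) \subset \polH^{s+\theta}(\Omega)$ (recall $s+\theta<2s$), $\phi$ is an admissible test function in the adjoint problem \cref{eq:adjoint_equation_weak}, giving
\[
 \calA(\bar p,\phi) + \int_\Omega \tfrac{\partial\fraka}{\partial u}(\cdot,\bar u)\,\bar p\,\phi \diff x = \sum_{\vertex} (\bar u(\vertex)-u_\vertex)\phi(\vertex).
\]
Second, we need to pass $\bar p$ into the state-derivative equation \cref{eq:first_derivative}. The test space there is $\polH^s(\Omega)$, but $\bar p$ only belongs to $\polH^{s-\theta}(\Omega)$. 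We therefore upgrade \cref{eq:first_derivative} to its strong form. Because $\phi \in \polH^{2s}(\Omega)$, the function $\Laps\phi = \sum_k \lambda_k^s \phi_k\varphi_k$ lies in $L^2(\Omega)$, and \cref{eq:first_derivative} gives $\Laps \phi + \tfrac{\partial\fraka}{\partial u}(\cdot,\bar u)\phi = w$ in $L^2(\Omega)$. Taking the $L^2$ inner product with $\bar p \in L^2(\Omega)$ and using the spectral definitions,
\[
 \int_\Omega \bar p\,\Laps\phi \diff x = \sum_k \lambda_k^s \bar p_k \phi_k = \calA(\bar p,\phi).
\]
This series converges absolutely by Cauchy--Schwarz, since $\bar p\in\polH^{s-\theta}(\Omega)$ and $\phi\in\polH^{s+\theta}(\Omega)$. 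Hence
\[
 \calA(\bar p,\phi) + \int_\Omega \tfrac{\partial\fraka}{\partial u}(\cdot,\bar u)\,\phi\,\bar p \diff x = \int_\Omega w\,\bar p \diff x ,
\]
and comparing with the first identity gives the claim.

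The main obstacle is this duality step: justifying the pairing between the very weak adjoint solution and the linearized state, given the mismatch of regularity. It is resolved by the $\polH^{2s}$ regularity of $\phi$ and the coefficient-level identity $\calA(v,w)=\sum_k\lambda_k^s v_kw_k$. If needed, one can first argue with truncated eigen-expansions and then pass to the limit. Everything else is the routine chain rule.
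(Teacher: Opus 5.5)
Your proof is correct and follows the paper's argument. You compute $j'(\bar q)$, test the adjoint equation with $\phi=\mathcal{S}'(\bar q)(q-\bar q)\in\mathbb{H}^{2s}(\Omega)\subset\mathbb{H}^{s+\theta}(\Omega)$, and pair the linearized state equation with $\bar p$. The only difference is how that last pairing is justified: the paper approximates $\bar p$ by $p_k\in C_0^\infty(\Omega)$ in $\mathbb{H}^{s-\theta}(\Omega)$ and passes to the limit using continuity of $\mathcal{A}$, whereas you use the $\mathbb{H}^{2s}$ regularity of $\phi$ to write the linearized equation in strong $L^2$ form and apply Parseval, which is equally valid and slightly more direct.
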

\begin{proof}
The proof builds on and extends the arguments developed for the simpler linear case $\fraka \equiv 0$ in \cite[Theorem 4.2]{Fractional_Delta}. We begin by rewriting inequality \cref{eq:first_order_basic} as follows:
\[
 0 \leq j'(\bar q)(q - \bar{q}) =  \sum_{\vertex \in \mathcal{D}}   (\calS \bar q (\vertex) - u_\vertex) \calS'(\bar q) (q - \bar q)(\vertex) + \int_{\Omega} \alpha \bar{q} (q - \bar{q}) \mathrm{d}x
 \eqqcolon \mathrm{I} + \mathrm{II}
\]
for all $q \in \mathbb{Q}_{ad}$. The term $\mathrm{II}$ is already present in the desired inequality \cref{eq:first_order}, so we focus on term $\mathrm{I}$. Let $q \in \mathbb{Q}_{ad}$, and set $u = \mathcal{S} q$. Define $\chi \coloneqq \calS'(\bar{q})(q-\bar{q})$ and note that \cref{thm:differentiability} shows that $\chi$ solves problem \cref{eq:first_derivative} with $u$ replaced by $\bar{u} = \mathcal{S}\bar{q}$ and $w$ replaced by $q-\bar{q} \in L^2(\Omega)$. We now use the arguments presented in the proof of \cref{thm:regularity_fractional_semilinear_PDE} to deduce that
\begin{equation}
\label{eq:chi_regular}
 \chi \in \mathbb{H}^{2s}(\Omega) \cap C(\bar \Omega),
 \qquad
\| \chi \|_{\mathbb{H}^{2s}(\Omega)} + \| \chi \|_{C(\bar \Omega)} \lesssim \left( 1+C_{\mathfrak{m}} \right) \| q-\bar{q} \|_{L^2(\Omega)},
\end{equation}
where, to bound $\tfrac{\partial \fraka}{\partial u}(\cdot,\bar{u})$ and thus obtain the estimate in \cref{eq:chi_regular}, we have used assumption \Cref{A3} with $\mathfrak{m} = \| \bar{u} \|_{L^{\infty}(\Omega)}$. Since $1 < s + \theta < 2s$ (see the second estimate in \cref{eq:s_and_theta}), we have $\mathbb{H}^{2s}(\Omega) \hookrightarrow \mathbb{H}^{s+ \theta}(\Omega)$. As a result, the function $\chi = S'(\bar{q})(q-\bar{q})$ belongs to $\mathbb{H}^{s+ \theta}(\Omega)$ and is therefore an admissible test function for the adjoint problem \cref{eq:adjoint_equation_weak}. Setting $v = \chi$ gives
\begin{equation}
\label{eq:aux_0_first_order}
\calA(\bar{p}, \chi ) + \int_{\Omega} \frac{\partial \fraka}{\partial u}(\cdot,\bar{u}) \bar{p} \chi \mathrm{d}x
=
\sum_{\vertex \in \mathcal{D}} (\bar{u}(\vertex) - u_\vertex) \calS'(\bar{q})(q-\bar{q})(\vertex) = \mathrm{I}.
\end{equation}
On the other hand, we would like to set $\bar{p}$ as a test function in the problem that $\chi = \calS'(\bar q) (u-\bar{u})$ solves. If that were possible, we would obtain
\begin{equation}
\label{eq:aux_1_first_order}
 \mathcal{B}(\chi,\bar{p}) + \int_{\Omega} \frac{\partial \fraka}{\partial u}(\cdot,\bar{u}) \chi \bar{p} \mathrm{d}x = \int_{\Omega} (q -\bar{q}) \bar{p} \mathrm{d}x.
\end{equation}
However, this is not possible because $\bar{p} \in \polH^{s-\theta}(\Omega) \setminus \polH^s(\Omega)$, so \cref{eq:aux_1_first_order} must be justified by different means. Therefore, we follow an argument from the proof of \cite[Theorem 4.2]{Fractional_Delta} and let $\{ p_k \}_{k \in \Natural}$ be a sequence in $C_0^{\infty}(\Omega)$ such that $p_k \rightarrow \bar{p}$ in $\mathbb{H}^{s-\theta}(\Omega)$ as $k \uparrow \infty$. Taking advantage of the fact that, for each $k \in \Natural$, $p_k$ belongs to $\polH^s(\Omega)$ we set $v = p_k$ in the problem that $\chi = S'(\bar q) (q-\bar{q})$ solves to obtain
\begin{equation}
\label{eq:aux_2_first_order}
 \mathcal{B}(\chi,p_k) + \int_{\Omega} \frac{\partial \fraka}{\partial u}(\cdot,\bar{u}) \chi p_k \mathrm{d}x = \int_{\Omega} (q -\bar{q}) p_k \mathrm{d}x
 \quad
 \forall k \in \Natural.
\end{equation}
Since $\chi \in \mathbb{H}^{2s}(\Omega)$ and $\mathbb{H}^{2s}(\Omega) \hookrightarrow \mathbb{H}^{s+ \theta}(\Omega)$, we have $\mathcal{B}(\chi,p_k) = \mathcal{A}(p_k,\chi)$. We now use the fact that $\mathcal{A}$ is continuous in $\polH^{s-\theta}(\Omega) \times \polH^{s+\theta}(\Omega)$ to deduce that
\begin{equation}
\label{eq:aux_3_first_order}
\lim_{k \uparrow \infty} \mathcal{B}(\chi,p_k)
=
\lim_{k \uparrow \infty}  \mathcal{A}(p_k,\chi)
=
\mathcal{A}(\bar{p},\chi).
\end{equation}
We also note that, using \Cref{A3} with $\frakm = \| \bar{u} \|_{L^\infty(\Omega)}$,
\begin{equation}
\label{eq:aux_4_first_order}
 \int_{\Omega} \left| \frac{\partial \fraka}{\partial u}(\cdot,\bar{u}) \chi (\bar{p} - p_k) \right|\mathrm{d}x \leq C_{\mathfrak{m}} \| \chi \|_{L^2(\Omega)} \| \bar{p} - p_k \|_{L^2(\Omega)} \rightarrow 0,
 \quad
 k \uparrow \infty.
\end{equation}
We thus take the limit in \cref{eq:aux_2_first_order} as $k \uparrow \infty$ and use \cref{eq:aux_3_first_order} and \cref{eq:aux_4_first_order} to deduce that
\[
 \mathcal{A}(\bar{p},\chi) + \int_{\Omega} \frac{\partial a}{\partial u}(\cdot,\bar{u}) \chi \bar{p} \mathrm{d}x = \int_{\Omega} (q -\bar{q}) \bar{p} \mathrm{d}x.
\]
From this identity and \cref{eq:aux_0_first_order}, we immediately deduce that
\[
  \mathrm{I} = \int_{\Omega} \bar{p}(q-\bar{q}) \mathrm{d}x.
\]
This yields the desired variational inequality \cref{eq:first_order} and concludes the proof.
\end{proof}

\subsection{Second order optimality conditions}
In this section, we derive necessary and sufficient second order optimality conditions for the optimal control problem \cref{eq:min,eq:state_equation_weak}. To do this, we require that \Cref{A1,A2,A3} hold. In addition, we assume that $f$ and $\fraka(\cdot,0)$ belong to $L^2(\Omega)$.

\subsubsection{Properties of the second derivative of $j$}

We begin our analysis with the following result.

\begin{proposition}[characterization of $j''$ and a Lipschitz property for $j''$]
If \Cref{A1,A2,A3} hold, then, for every $q, w_1, w_2 \in L^2(\Omega)$, we have the characterization
 \begin{equation}
 \label{eq:jpp_characterization}
  j''(q)(w_1,w_2) = \alpha (w_1,w_2)_{L^2(\Omega)} - \int_{\Omega} \frac{\partial^2 \fraka}{\partial u^2}(\cdot,u) \phi_{w_1} \phi_{w_2} p \mathrm{d}x + \sum_{\vertex \in \mathcal{D}} \phi_{w_1}(\vertex) \phi_{w_2}(\vertex),
 \end{equation}
where $u = \mathcal{S}q$, $p$ is the solution to \cref{eq:adjoint_equation_weak}, and $\phi_{w_i} = S'(q)w_i$, with $i \in \{1,2\}$. In addition, if $q_1, q_2 \in \mathbb{Q}_{ad}$, then there exists a constant $\mathfrak{C}$ such that
\begin{equation}
\label{eq:jpp_Lipschitz}
|j''(q_1)(w,w) - j''(q_2)(w,w)| \leq \mathfrak{C} \| q_1 - q_2 \|_{L^2(\Omega)} \| w \|_{L^2(\Omega)}^2.
 \end{equation}
\end{proposition}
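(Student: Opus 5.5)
We begin with the characterization \cref{eq:jpp_characterization}. By \cref{pro:j_is_of_class_C2} and the chain rule applied to \cref{eq:reduced_cost_functional}, we have
\[
j''(q)(w_1,w_2) = \alpha (w_1,w_2)_{L^2(\Omega)} + \sum_{\vertex \in \calD} \phi_{w_1}(\vertex)\phi_{w_2}(\vertex) + \sum_{\vertex \in \calD} (u(\vertex)-u_\vertex)\, \psi(\vertex),
\]
where $\psi = \calS''(q)(w_1,w_2)$ is characterized by \cref{eq:second_derivative}. Point evaluations are legitimate because the right-hand side of \cref{eq:second_derivative}, namely $-\tfrac{\partial^2 \fraka}{\partial u^2}(\cdot,u)\phi_{w_1}\phi_{w_2}$, belongs to $L^2(\Omega)$ (indeed to $L^\infty(\Omega)$). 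This uses \Cref{A3} with $\frakm = \|u\|_{C(\bar\Omega)}$ together with $\phi_{w_i} \in C(\bar\Omega)$, which follows as in \cref{eq:chi_regular}. The argument in the proof of \cref{thm:regularity_fractional_semilinear_PDE} then gives $\psi \in \polH^{2s}(\Omega) \hookrightarrow \polH^{s+\theta}(\Omega) \cap C(\bar\Omega)$.

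The last sum is handled exactly as in the proof of \cref{thm:first_order_optimality_conditions}. First, test the adjoint problem \cref{eq:adjoint_equation_weak} with $v = \psi$. Next, test \cref{eq:second_derivative} with $p_k \in C_0^\infty(\Omega)$, where $p_k \to p$ in $\polH^{s-\theta}(\Omega)$. Then use $\calB(\psi,p_k) = \calA(p_k,\psi)$ and pass to the limit; the zeroth-order and right-hand-side integrals converge because $p_k \to p$ in $L^2(\Omega)$. This yields
\[
\sum_{\vertex} (u(\vertex)-u_\vertex)\psi(\vertex) = -\int_\Omega \tfrac{\partial^2 \fraka}{\partial u^2}(\cdot,u)\phi_{w_1}\phi_{w_2} p \diff x,
\]
which is \cref{eq:jpp_characterization}.

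For the Lipschitz bound \cref{eq:jpp_Lipschitz}, set $u_i = \calS q_i$, $p_i$ for the corresponding adjoint states, and $\phi_i = \calS'(q_i)w$. All $q_i \in \polQ_{ad}$ are bounded in $L^2(\Omega)$, so \cref{eq:regularity_semilinear_PDE} gives a uniform $\frakm$ with $\|u_i\|_{C(\bar\Omega)} \le \frakm$. \Cref{thm:adjoint_well-posedness} then gives uniform bounds on $\|p_i\|_{\polH^{s-\theta}(\Omega)}$, hence on $\|p_i\|_{L^2(\Omega)}$. Moreover, $\|\phi_i\|_{C(\bar\Omega)} \lesssim \|w\|_{L^2(\Omega)}$ uniformly. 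We then derive three Lipschitz estimates.
\begin{enumerate}[leftmargin=*]
\item $\|u_1-u_2\|_{C(\bar\Omega)} \lesssim \|q_1-q_2\|_{L^2(\Omega)}$, which is \cref{thm:Lipschitz_property_state_equation}.
\item $\|\phi_1-\phi_2\|_{C(\bar\Omega)} \lesssim \|q_1-q_2\|_{L^2(\Omega)}\|w\|_{L^2(\Omega)}$. The difference solves \cref{eq:first_derivative} with coefficient $\tfrac{\partial\fraka}{\partial u}(\cdot,u_1)$ and right-hand side $-[\tfrac{\partial\fraka}{\partial u}(\cdot,u_1)-\tfrac{\partial\fraka}{\partial u}(\cdot,u_2)]\phi_2$, whose $L^2$ norm is $\lesssim \|u_1-u_2\|_{C(\bar\Omega)}\|w\|_{L^2(\Omega)}$ because $\tfrac{\partial\fraka}{\partial u}$ is locally Lipschitz by the bound on $\tfrac{\partial^2\fraka}{\partial u^2}$ in \Cref{A3}. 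We then apply the regularity argument of \cref{thm:regularity_fractional_semilinear_PDE}.
\item $\|p_1-p_2\|_{L^2(\Omega)} \lesssim \|q_1-q_2\|_{L^2(\Omega)}$. The difference solves \cref{eq:WeakFormulation} with $c = \tfrac{\partial\fraka}{\partial u}(\cdot,u_1)$ and right-hand side
\[
\mu = \sum_\vertex (u_1(\vertex)-u_2(\vertex))\delta_\vertex - \bigl[\tfrac{\partial\fraka}{\partial u}(\cdot,u_1)-\tfrac{\partial\fraka}{\partial u}(\cdot,u_2)\bigr]p_2 .
\]
The second term is an $L^2$ function, hence a finite measure, with norm $\lesssim \|u_1-u_2\|_{C(\bar\Omega)}\|p_2\|_{L^2(\Omega)}$. \Cref{thm:BNB} then gives the estimate in $\polH^{s-\theta}(\Omega) \hookrightarrow L^2(\Omega)$.
\end{enumerate}

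Finally, we split the difference of \cref{eq:jpp_characterization} at $q_1$ and $q_2$ into telescoping terms. The $\alpha$ term cancels. The point term satisfies $|\phi_1(\vertex)^2-\phi_2(\vertex)^2| \le \|\phi_1-\phi_2\|_{C(\bar\Omega)}(\|\phi_1\|_{C(\bar\Omega)}+\|\phi_2\|_{C(\bar\Omega)})$. The integral term splits into differences of $\tfrac{\partial^2\fraka}{\partial u^2}(\cdot,u_i)$ (Lipschitz by \Cref{A3}), of $\phi_i$ in $L^\infty(\Omega)$, and of $p_i$ in $L^1(\Omega) \supset L^2(\Omega)$. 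Each piece is $\lesssim \|q_1-q_2\|_{L^2(\Omega)}\|w\|_{L^2(\Omega)}^2$.

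The main obstacle is step 3: the adjoint states are only in $\polH^{s-\theta}(\Omega)$, and the perturbation term must be recognized as a measure so that \cref{thm:BNB} applies with a constant uniform in the coefficient. The constant there depends on $\|c\|_{L^\infty(\Omega)} \le C_\frakm$, which is uniform over $\polQ_{ad}$.
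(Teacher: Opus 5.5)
Your proposal is correct and follows essentially the same route as the paper. The characterization comes from testing the adjoint equation with $\psi=\mathcal{S}''(q)(w_1,w_2)\in\mathbb{H}^{2s}(\Omega)$ and justifying the reverse test with $p$ by a $C_0^\infty(\Omega)$ approximation. The Lipschitz bound uses the same three-way telescoping of the integral term (differences of $\partial^2\mathfrak{a}/\partial u^2$, of the linearized states in $C(\bar\Omega)$, and of the adjoint states via \cref{thm:BNB} with the perturbation viewed as a finite measure), plus the point-evaluation term. Your explicit remark that the stability constant in \cref{thm:BNB} depends on $\|c\|_{L^\infty(\Omega)}\le C_{\mathfrak{m}}$, and is therefore uniform over $\mathbb{Q}_{ad}$, makes precise something the paper uses only implicitly.
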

\begin{proof}
Since $j$ is of class $C^2$ (see \cref{pro:j_is_of_class_C2}), we perform simple computations to conclude that for every $q,w_1,w_2 \in L^2(\Omega)$, we have
\begin{equation}
\label{eq:jpp_aux}
  j''(q)(w_1,w_2) = \sum_{\vertex \in \mathcal{D}} \left[ \phi_{w_2}(\vertex) \phi_{w_1}(\vertex) + (Sq(\vertex) - u_\vertex) \psi(\vertex) \right] + \alpha (w_1,w_2)_{L^2(\Omega)},
\end{equation}
where $\psi = S''(q)(w_1,w_2)$. Since $\psi \in \mathbb{H}^{2s}(\Omega)$ and $\mathbb{H}^{2s}(\Omega) \hookrightarrow \mathbb{H}^{s+\theta}(\Omega)$, we can set $v = \psi$ in the adjoint problem \cref{eq:adjoint_equation_weak}. On the other hand, we can use a similar approximation argument as in the proof of \cref{thm:first_order_optimality_conditions}, which essentially allows us to set $v = p$ in problem \cref{eq:second_derivative}. From the resulting relations, we can deduce that
\[
 \sum_{\vertex \in \mathcal{D}} (Sq(\vertex) - u_\vertex) \psi(\vertex)
 =
 - \int_{\Omega} \frac{\partial^2 \fraka}{\partial u^2}(\cdot,u) \phi_{w_1} \phi_{w_2} p \mathrm{d}x.
\]
By applying this identity in \cref{eq:jpp_aux}, we obtain \cref{eq:jpp_characterization}.

We now derive the bound \cref{eq:jpp_Lipschitz}. Let $q_1, q_2 \in\mathbb{Q}_{ad}$, and let $u_i = \mathcal{S} q_i$, with $i \in \{1,2\}$. Define
\[
\frakd(q_1,q_2) \coloneqq j''(q_1)(w,w) - j''(q_2)(w,w).
\]
We use the characterization \cref{eq:jpp_characterization} for $j''$ and write
\begin{multline}
\label{eq:dq1q2}
\mathfrak{d}(q_1,q_2) = \int_{\Omega} \left[-\frac{\partial^2 \fraka}{\partial u^2}(\cdot,u_1) \varphi^2 p_1
 +
\frac{\partial^2 \fraka}{\partial u^2}(\cdot,u_2) \chi^2 p_2 \right] \mathrm{d}x
\\
+
\sum_{\vertex \in \mathcal{D}} [\varphi^2(\vertex) - \chi^2(\vertex)]
\eqqcolon \mathfrak{J} + \mathfrak{K},
\end{multline}
where $\varphi = \mathcal{S}'(q_1) w$, $\chi = \mathcal{S}'(q_2) w$, and $p_i$ is the solution to \cref{eq:adjoint_equation_weak}, with $u$ replaced by $u_i$. Here, $i \in \{1,2\}$. To bound $\mathfrak{d}(q_1,q_2)$, we construct suitable differences as follows:
\begin{multline}
\label{eq:mathfrakJ}
 \mathfrak{J}
 =
 \int_{\Omega} \left[
 \frac{\partial^2 \fraka}{\partial u^2}(\cdot,u_2)
 -
 \frac{\partial^2 \fraka}{\partial u^2}(\cdot,u_1)
 \right]
 \chi^2 p_2 \mathrm{d}x
 + \int_{\Omega}
 \frac{\partial^2 \fraka}{\partial u^2}(\cdot,u_1)\chi^2 ( p_2 -p_1) \mathrm{d}x
 \\
 +\int_{\Omega} \frac{\partial^2 \fraka}{\partial u^2}(\cdot,u_1) (\chi^2 - \varphi^2)  p_1 \mathrm{d}x \eqqcolon \mathfrak{J}_1 + \mathfrak{J}_2 + \mathfrak{J}_3.
\end{multline}
In what follows, we bound the terms $\mathfrak{J}_1$, $\mathfrak{J}_2$, and $\mathfrak{J}_3$. To control the term $\mathfrak{J}_1$, we use H{\"o}lder's inequality, \Cref{A3}, the Lipschitz property \cref{eq:Lipschitz_property}, and the regularity estimates $\| \chi  \|_{C(\bar \Omega)} \lesssim  \| \chi  \|_{\mathbb{H}^{2s}(\Omega)} \lesssim  \| w  \|_{L^{2}(\Omega)}$:
\begin{equation}
\label{eq:estimate_J1}
  \begin{aligned}
    \mathfrak{J}_1
    &\leq
    C_{\mathfrak{m}}  \| u_1 - u_2  \|_{L^{\infty}(\Omega)} \| \chi \|^2_{L^{\infty}(\Omega)} \| p_2 \|_{L^1(\Omega)}
    \\
    &\lesssim
    \| q_1 - q_2  \|_{L^{2}(\Omega)} \| w  \|^2_{L^{2}(\Omega)}  \| p_2 \|_{L^1(\Omega)}.
 \end{aligned}
\end{equation}
Here, $\mathfrak{m} = \max \{ \| u_1  \|_{L^{\infty}(\Omega)}, \| u_2  \|_{L^{\infty}(\Omega)}\}$. To bound the right-hand side of \cref{eq:estimate_J1}, we now use the following stability bounds, which follow from \Cref{thm:adjoint_well-posedness}:
\begin{equation}
\| p_2 \|_{L^1(\Omega)} \lesssim \| p_2 \|_{\mathbb{H}^{s-\theta}(\Omega)}
\lesssim
\| f - \fraka(\cdot,0) \|_{L^2(\Omega)} + \| q_2 \|_\Ldeux
+
\sum_{\vertex \in \mathcal{D}} |\mathfrak{u}_\vertex|.
\label{eq:stability_p_2}
\end{equation}
This, combined with \cref{eq:estimate_J1}, gives the following bound for $\frakI_1$:
\[
  \frakI_1 \lesssim \| q_1 - q_2  \|_{L^{2}(\Omega)} \| w  \|^2_{L^{2}(\Omega)}.
\]
The implicit constant depends on the control problem data but is independent of $q_1$, $q_2$, and $w$. Recall that $q_1, q_2 \in \mathbb{Q}_{ad}$. The control of the term $\mathfrak{J}_2$ follows similar arguments:
\begin{equation}
\label{eq:estimate_J2}
 \mathfrak{J}_2
 \leq
 C_{\mathfrak{m}} \| \chi \|^2_{L^{\infty}(\Omega)} \| p_2 - p_1 \|_{L^1(\Omega)}
 \\
 \lesssim
\| w  \|^2_{L^{2}(\Omega)} \| p_2 - p_1 \|_{L^1(\Omega)},
\end{equation}
where $\mathfrak{m} = \| u_1  \|_{L^{\infty}(\Omega)}$. To bound $\| p_2 - p_1 \|_{L^1(\Omega)}$, we note that $\wp \coloneqq p_2 - p_1 \in \mathbb{H}^{s-\theta}(\Omega)$ satisfies
\begin{multline*}
  \calA(\wp,v) + \int_\Omega \frac{\partial \fraka}{\partial u}(\cdot, u_2) \wp v \diff x =\int_\Omega \left[ \frac{\partial \fraka}{\partial u}(\cdot, u_1) - \frac{\partial \fraka}{\partial u}(\cdot, u_2) \right] p_1 v \diff x
  \\
  + \sum_{\vertex \in \calD} (u_2(\vertex) - u_1(\vertex)) \langle \delta_\vertex , v \rangle
  \quad \forall v \in \polH^{s+\theta}(\Omega).
\end{multline*}
Since $s - \theta > 0$, it is clear that $\mathbb{H}^{s- \theta}(\Omega) \hookrightarrow L^1(\Omega)$. As a result, $\| \wp \|_{L^1(\Omega)}
 \lesssim
 \| \wp \|_{\mathbb{H}^{s-\theta}(\Omega)}$.
We now apply the stability bound from \cref{thm:BNB} to arrive at
\begin{equation*}
 \| \wp \|_{\mathbb{H}^{s-\theta}(\Omega)}
 \lesssim
 \left( C_{\mathfrak{m}} \| p_1 \|_{L^{1}(\Omega)}  + 1 \right) \| u_1 - u_2 \|_{C(\bar \Omega)}.
\end{equation*}
Notice that, to obtain this estimate, the bound
\begin{align*}
  \left\| \left[ \frac{\partial \fraka}{\partial u}(\cdot, u_1) - \frac{\partial \fraka}{\partial u}(\cdot, u_2) \right] p_1 \right\|_{\calM(\Omega)} &\leq
  \left\| \left[ \frac{\partial \fraka}{\partial u}(\cdot, u_1) - \frac{\partial \fraka}{\partial u}(\cdot, u_2) \right] p_1 \right\|_{L^1(\Omega)}
  \\
  &\leq C_\frakm \| u_1 - u_2 \|_{L^\infty(\Omega)} \| p_1 \|_{L^1(\Omega)},
\end{align*}
which follows from \Cref{A3}, was used. Finally, using the Lipschitz property \cref{eq:Lipschitz_property} and a stability bound for $p_1$ in $\mathbb{H}^{s-\theta}(\Omega)$, similar to the one derived for $p_2$ in \cref{eq:stability_p_2}, we conclude that
\[
  \| p_2 - p_1 \|_{\mathbb{H}^{s-\theta}(\Omega)} \lesssim \| q_1 - q_2  \|_{L^{2}(\Omega)}.
\]
By substituting this bound into \cref{eq:estimate_J2}, we finally obtain the control of $\frakJ_2$:
\[
  \frakJ_2 \lesssim \| q_1 - q_2  \|_{L^{2}(\Omega)} \|w \|_\Ldeux^2.
\]

The control of $\mathfrak{J}_3$ is as follows:
\begin{equation}
\label{eq:estimate_J3}
\begin{aligned}
  \mathfrak{J}_3 &\leq C_{\mathfrak{m}} \| \chi - \varphi \|_{L^{\infty}(\Omega)} \left( \| \chi \|_{L^{\infty}(\Omega)} + \| \varphi \|_{L^{\infty}(\Omega)} \right) \| p_1 \|_{L^{1}(\Omega)}
  \\
  &\lesssim \| \chi - \varphi \|_{L^{\infty}(\Omega)} \| w \|_{L^{2}(\Omega)},
\end{aligned}
\end{equation}
where $\mathfrak{m} = \| u_1  \|_{L^{\infty}(\Omega)}$. To obtain the last bound, we used the regularity estimates $\| \chi  \|_{C(\bar \Omega)} \lesssim  \| \chi  \|_{\mathbb{H}^{2s}(\Omega)} \lesssim  \| w  \|_{L^{2}(\Omega)}$ and $\| \varphi  \|_{C(\bar \Omega)} \lesssim  \| \varphi  \|_{\mathbb{H}^{2s}(\Omega)} \lesssim  \| w  \|_{L^{2}(\Omega)}$. The control of $\| p_1 \|_{L^{1}(\Omega)}$ is similar to that derived for $p_2$ in \cref{eq:stability_p_2}. Thus, it suffices to bound $\| \chi - \varphi \|_{L^{\infty}(\Omega)}$. To do this, we note that the function $\Phi \coloneqq \chi - \varphi \in \mathbb{H}^s(\Omega)$ satisfies, for every $v \in \mathbb{H}^s(\Omega)$,
\[
  \calB( \Phi ,v) + \int_{\Omega} \frac{\partial \fraka}{\partial u}(\cdot,u_2)  \Phi v \mathrm{d}x =
  \int_{\Omega} \left[ \frac{\partial \fraka}{\partial u}(\cdot,u_1) - \frac{\partial \fraka}{\partial u}(\cdot,u_2) \right]  \varphi v \mathrm{d}x,
\]
which, upon setting
\[
  \frakb(x,\zeta) \coloneqq \frac{\partial \fraka}{\partial u}(x,u_2(x)) \zeta,
\]
fits into the setting of problem \cref{eq:Fractional_semilinear_PDE_weak}. Indeed, monotonicity follows from \Cref{A2}, while \cref{eq:a_bounded_for_psi_m} is a consequence of \Cref{A3} and $u_2 \in L^\infty(\Omega)$. Another application of \Cref{A3} then allows us to deduce the Lipschitz property \cref{eq:a_is_Lipschitz}. Thus, we invoke \cref{eq:regularity_semilinear_PDE} to conclude
\[
  \| \chi - \varphi \|_{C(\bar\Omega)} \lesssim C_{\mathfrak{m}} \| u_1 - u_2 \|_{L^2(\Omega)} \| \varphi \|_{L^{\infty}(\Omega)} \lesssim \| q_1 - q_2 \|_{L^2(\Omega)} \| w  \|_{L^{2}(\Omega)}.
\]
Now substitute
this estimate into \cref{eq:estimate_J3}, to obtain $\mathfrak{J}_3 \lesssim \| q_1 - q_2 \|_{L^2(\Omega)} \| w  \|^2_{L^{2}(\Omega)}$.

The estimate for $\mathfrak{J}$ thus follows from substituting the bounds derived for $\mathfrak{J}_1$, $\mathfrak{J}_2$, and $\mathfrak{J}_3$ into \cref{eq:mathfrakJ}; that is, we have
\[
  \mathfrak{J} \lesssim \| q_1 - q_2 \|_{L^2(\Omega)} \| w  \|^2_{L^{2}(\Omega)}.
\]

The final step is to control the term $\mathfrak{K}$ in \cref{eq:dq1q2}:
\[
 \mathfrak{K} \lesssim \| \varphi - \chi \|_{L^{\infty}(\Omega)} \left( \| \varphi \|_{L^{\infty}(\Omega)} + \| \chi \|_{L^{\infty}(\Omega)} \right)
 \lesssim
 \| q_1 - q_2 \|_{L^2(\Omega)} \| w  \|^2_{L^{2}(\Omega)}.
\]

Collect the estimates for $\mathfrak{J}$ and $\mathfrak{K}$ and use them in \cref{eq:dq1q2} to deduce the desired bound. This concludes the proof.
\end{proof}

\subsubsection{Second order necessary optimality conditions}
We begin this section by introducing some preliminary material. Let $(\bar u, \bar q, \bar p)$ $\in$ $\mathbb{H}^s(\Omega) \times \mathbb{Q}_{ad} \times \mathbb{H}^{s-\theta}(\Omega)$ satisfy the first order optimality conditions \cref{eq:state_equation_weak,eq:adjoint_equation_weak,eq:first_order}. Define
\[
  \bar{\mathsf{d}} \coloneqq \bar{p} + \alpha \bar{q}.
\]
From the variational inequality \cref{eq:first_order}, it can be deduced that, for almost every $x \in \Omega$,
\begin{equation}
\label{eq:mathsfd_ineq}
  \begin{dcases}
    \bar{\mathsf{d}}(x) = 0, & \bar{q}(x) \in (a,b),
   \\
   \bar{\mathsf{d}}(x) \geq 0, & \bar{q}(x)=a,
   \\
   \bar{\mathsf{d}}(x) \leq 0, & \bar{q}(x)=b.
  \end{dcases}
\end{equation}

We now define the so-called cone of critical directions. To do this, we introduce
\begin{align*}
  \Upsilon_{\bar q} &\coloneqq \left\{ h \in \Ldeux \ : \ \bar{\mathsf{d}}(x) \neq 0 \implies h(x) = 0 \ \mae \ x \in \Omega \right\},
  \\
  \pluto_{\bar q}^a &\coloneqq \left\{ h \in \Ldeux \ : \ \bar{q}(x) =  a \implies h(x) \geq 0 \ \mae \ x \in \Omega \right\},
  \\
  \pluto_{\bar q}^b &\coloneqq \left\{ h \in \Ldeux \ : \ \bar{q}(x) = b \implies h(x) \leq 0  \ \mae \ x \in \Omega \right\},
  \\
  \pluto_{\bar q} &\coloneqq \pluto_{\bar q}^a \cap \pluto_{\bar q}^b.
\end{align*}
Notice that although the definition of $\Upsilon_{\bar q}$ involves the function $\bar{\mathsf{d}}$, this function is uniquely determined by $\bar{q}$. The cone of critical directions is then defined as
\begin{equation}
\label{eq:cone_of_critical}
  C_{\bar{q}} \coloneqq \Upsilon_{\bar q } \cap \pluto_{\bar q}.
\end{equation}

In the following result, we present necessary second order optimality conditions. The proof uses minor adaptations of the arguments used to obtain \cite[Theorem 23]{MR3586845}. As the proof is brief, we include it for completeness.

\begin{theorem}[second order necessary optimality conditions]
\label{thm:necessary_second_order}
If $\bar{q} \in \mathbb{Q}_{ad}$ is a locally optimal control for \cref{eq:min,eq:state_equation_weak}, then $j''(\bar q)(h,h) \geq 0$ for all $h \in C_{\bar{q}}$.
\end{theorem}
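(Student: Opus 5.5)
The plan is the standard truncation argument (as in \cite[Theorem 23]{MR3586845}). Fix $h \in C_{\bar q}$. For $k \in \Natural$, define the truncated direction $h_k$ pointwise a.e.:
\[
  h_k(x) \coloneqq
  \begin{dcases}
    0, & \text{if } a < \bar q(x) < a + \tfrac1k \text{ or } b - \tfrac1k < \bar q(x) < b,\\
    \Pi_{[-k,k]}(h(x)), & \text{otherwise},
  \end{dcases}
\]
where $\Pi_{[-k,k]}$ is the projection onto $[-k,k]$. Each $h_k$ inherits membership in $C_{\bar q}$. Indeed, it vanishes wherever $h$ does, so $h_k \in \Upsilon_{\bar q}$. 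Projection onto $[-k,k]$ preserves sign, so the constraints of $\pluto_{\bar q}^a$ and $\pluto_{\bar q}^b$ persist. Moreover, $|h_k| \le |h|$ and $h_k \to h$ a.e. The pointwise convergence holds because the sets $\{a<\bar q<a+1/k\}$ and $\{b-1/k<\bar q<b\}$ shrink to null sets. Dominated convergence then gives $h_k \to h$ in $L^2(\Omega)$.

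Next I check admissibility along the ray. For fixed $k$ and $0<\rho \le 1/k^2$, the function $\bar q + \rho h_k$ lies in $\polQ_{ad}$. Where $\bar q(x)\in[a+1/k, b-1/k]$, we have $|\rho h_k| \le \rho k \le 1/k$. Where $\bar q = a$, $h_k \ge 0$, and we need $\rho$ small enough that $a + \rho h_k \le b$; this holds once $\rho k \le b-a$. The case $\bar q = b$ is symmetric. On the excluded strips $h_k = 0$.

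For the main computation, since $h_k \in \Upsilon_{\bar q}$ and $\bar{\mathsf d} = 0$ off the support of $h_k$, the representation $j'(\bar q)h_k = \int_\Omega \bar{\mathsf d}\, h_k \diff x$ from the proof of \cref{thm:first_order_optimality_conditions} gives $j'(\bar q)h_k = 0$. Local optimality together with Taylor's theorem (valid because $j$ is $C^2$ by \cref{pro:j_is_of_class_C2}) then yields, for small $\rho$,
\[
  0 \le j(\bar q+\rho h_k) - j(\bar q) = \tfrac{\rho^2}{2}\, j''(\bar q + \vartheta_\rho \rho h_k)(h_k,h_k)
\]
for some $\vartheta_\rho \in (0,1)$. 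Dividing by $\rho^2/2$ and letting $\rho \downarrow 0$, I use the Lipschitz estimate \cref{eq:jpp_Lipschitz} (both points lie in $\polQ_{ad}$), or simply continuity of $j''$, to obtain $j''(\bar q)(h_k,h_k) \ge 0$.

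Finally, letting $k\uparrow\infty$ requires continuity of $w \mapsto j''(\bar q)(w,w)$ on $L^2(\Omega)$, which follows because $j''(\bar q)$ is a bounded bilinear form. Hence $j''(\bar q)(h,h) = \lim_k j''(\bar q)(h_k,h_k) \ge 0$. I expect the only delicate point to be the truncation: $h$ is merely $L^2$, so pure rays $\bar q + \rho h$ may leave $\polQ_{ad}$. Both the cutoff near the active bounds and the clipping of $h$ are needed for admissibility, and both must preserve the sign and zero structure defining $C_{\bar q}$.
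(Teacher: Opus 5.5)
Your proposal is correct and follows the paper's proof essentially step for step. The shared steps are the same truncation $h_k$, the same admissible step size $\rho\le\min\{k^{-2},(b-a)k^{-1}\}$ (your first claim with $\rho\le 1/k^2$ alone needs the condition $\rho k\le b-a$ that you add afterwards), a Taylor expansion using $j'(\bar q)h_k=0$, the limit $\rho\downarrow 0$, and then $k\uparrow\infty$. The only cosmetic difference is that you pass to the limit in $k$ via boundedness of the bilinear form $j''(\bar q)$ on $L^2(\Omega)$ rather than through the characterization \cref{eq:jpp_characterization}; this is equally valid.
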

\begin{proof}
Let $h \in C_{\bar q}$. Since $b-a>0$ there is $K \in \Natural$ such that, for every $k \geq K$,
\[
  a < a+ \frac1k < b - \frac1k < b.
\]
Now, for each $k \geq K$, we define the function $h_k : \Omega \rightarrow \Real$ as
\begin{equation}
\label{eq:hk}
  h_k(x) \coloneqq
  \begin{dcases}
    0, & \bar{q}(x) \in \left( a, a+\frac1k \right) \cup \left( b-\frac1k, b \right) ,
    \\
    \min\left\{ k, \max\{ -k, h(x) \} \right\}, & \text{otherwise}.
  \end{dcases}
\end{equation}
Since $h \in C_{\bar{q}}$,  it follows directly from the definition of $h_k$ that $h_k$ also belongs to $C_{\bar{q}}$. In addition, for almost every $x\in \Omega$, $| h_k(x) | \leq | h(x) |$ and $h_k(x) \rightarrow h(x)$ as $k \uparrow \infty$. An application of the dominated convergence theorem shows that, as $k \uparrow \infty$, $h_k \rightarrow h$ in $L^2(\Omega)$. We now define $\rho_{\star} \coloneqq \min\{k^{-2}, (b-a)k^{-1}\}$. It can be proved that $\bar{q} + \rho h_k $ belongs to $\mathbb{Q}_{ad}$ for every $k \geq K$ and $\rho \in (0,\rho_{\star}]$.

We now use the fact that, for $\rho \in (0,\rho_\star]$, $\bar{q} + \rho h_k$ is admissible and that $\bar{q}$ is a local minimizer to deduce that, after possibly reducing $\rho$, $j(\bar{q}) \leq j(\bar{q} + \rho h_k)$. The next step is to apply Taylor's theorem and use the fact that $j'(q) h_k = 0$, which follows from $h_k \in C_{\bar{q}}$ and \cref{eq:mathsfd_ineq}, to obtain
\[
 0 \leq j(\bar{q} + \rho h_k)  - j(\bar{q}) = \rho j'(\bar{q})h_k + \frac{\rho^2}{2} j''(\bar{q} + \theta_k \rho h_k)(h_k,h_k) = \frac{\rho^2}{2} j''(\bar{q} + \theta_k \rho h_k)(h_k,h_k),
\]
where $\theta_k \in (0,1)$. Multiply both sides of the previous inequality by $2/\rho^2$, then take the limit as $\rho \downarrow 0$ and use \cref{eq:jpp_Lipschitz} to obtain $j''(\bar{q})(h_k, h_k) \geq 0$. The final step is to use the characterization \cref{eq:jpp_characterization}, well-posedness and regularity results for problem \cref{eq:first_derivative}, and the fact that $h_k \rightarrow h$ in $L^2(\Omega)$ as $k \uparrow \infty$ to deduce that $j''(\bar{q})(h,h) \geq 0$. This concludes the proof.
\end{proof}

\subsubsection{Second order sufficient optimality conditions}
We now establish sufficient second order optimality conditions with a minimal gap with respect to the necessary conditions derived in \cref{thm:necessary_second_order}. The proof presented here, based on an adaptation of the arguments from the proof of \cite[Theorem 23]{MR3586845}, emphasizes the role of the fractional operator $(-\Delta)^s$ and the inclusion of point evaluations of the state in the cost functional.

\begin{theorem}[second order sufficient  optimality conditions]
Assume that the triplet $(\bar u, \bar q, \bar p)$ $\in$ $\mathbb{H}^s(\Omega) \times \mathbb{Q}_{ad} \times \mathbb{H}^{s-\theta}(\Omega)$ satisfies the first order optimality conditions \cref{eq:state_equation_weak,eq:adjoint_equation_weak,eq:first_order}, and that $j''(\bar{q})(v,v) > 0$ for all $v \in C_{\bar{q}} \setminus \{ 0 \}$. Then, there exist $\mu>0$ and $\sigma>0$ such that, for every $q \in \mathbb{Q}_{ad}$ that satisfies
\[
  \| q - \bar{q} \|_{L^2(\Omega)} \leq \sigma,
\]
we have
\begin{equation}
\label{eq:quadratic_grow}
 j(q) \geq j(\bar{q}) + \frac{\mu}{2} \| q - \bar{q} \|^2_{L^2(\Omega)}
 .
\end{equation}
\end{theorem}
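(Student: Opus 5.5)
We argue by contradiction, following the scheme of \cite[Theorem 23]{MR3586845}. Suppose the quadratic growth \cref{eq:quadratic_grow} fails for every $\mu,\sigma>0$. Then there is a sequence $\{q_k\}_{k\in\Natural}\subset\mathbb{Q}_{ad}$ with $q_k\neq\bar q$, $\|q_k-\bar q\|_{L^2(\Omega)}\to0$, and
\[
  j(q_k) < j(\bar q) + \frac{1}{2k}\|q_k-\bar q\|^2_{L^2(\Omega)}.
\]
We set $\rho_k\coloneqq\|q_k-\bar q\|_{L^2(\Omega)}$ and $h_k\coloneqq(q_k-\bar q)/\rho_k$. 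Since $\|h_k\|_{L^2(\Omega)}=1$, a subsequence satisfies $h_k\rightharpoonup h$ in $L^2(\Omega)$.

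\emph{Step 1: $h\in C_{\bar q}$.} The set of $L^2$ functions that are $\geq0$ where $\bar q=a$ and $\leq0$ where $\bar q=b$ is closed and convex. Each $h_k$ lies in it, so $h\in\pluto_{\bar q}$. By Taylor's theorem and the contradiction hypothesis,
\[
  \rho_k j'(\bar q)h_k+\frac{\rho_k^2}{2}j''(\bar q+\vartheta_k\rho_k h_k)(h_k,h_k)<\frac{\rho_k^2}{2k}.
\]
The second derivative is bounded on bounded sets, by \cref{eq:jpp_characterization} and the stability bounds. Dividing by $\rho_k$ gives $j'(\bar q)h_k\leq C\rho_k\to0$. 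Since $j'(\bar q)h_k=\int_\Omega\bar{\mathsf d}h_k\,\mathrm{d}x$ and $\bar{\mathsf d}\in L^2(\Omega)$ (because $\bar p\in\polH^{s-\theta}(\Omega)\hookrightarrow L^2(\Omega)$), weak convergence yields $\int_\Omega\bar{\mathsf d}h\,\mathrm{d}x\leq0$. On the other hand, \cref{eq:mathsfd_ineq} together with $h\in\pluto_{\bar q}$ gives $\bar{\mathsf d}h\geq0$ a.e. Hence $\bar{\mathsf d}h=0$ a.e., that is, $h\in\Upsilon_{\bar q}$. Therefore $h\in C_{\bar q}$.

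\emph{Step 2: $j''(\bar q)(h,h)\leq0$, which forces $h=0$.} By \cref{eq:first_order}, $j'(\bar q)h_k\geq0$. Using this in the Taylor expansion above and dividing by $\rho_k^2/2$ gives $j''(\bar q+\vartheta_k\rho_kh_k)(h_k,h_k)<1/k$. The Lipschitz bound \cref{eq:jpp_Lipschitz} then gives
\[
  j''(\bar q)(h_k,h_k)<\frac1k+\mathfrak C\rho_k\to0.
\]
We now pass to the limit in each term of \cref{eq:jpp_characterization}; this is the key step. Set $\phi_k=\calS'(\bar q)h_k$. The map $w\mapsto\calS'(\bar q)w$ is bounded from $L^2(\Omega)$ into $\polH^{2s}(\Omega)$, as in \cref{eq:chi_regular}, and $\polH^{2s}(\Omega)\hookrightarrow C(\bar\Omega)$ compactly by \cref{pro:characterization}. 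Hence $\phi_k\to\phi_h\coloneqq\calS'(\bar q)h$ uniformly. Consequently
\[
  \sum_{\vertex\in\calD}\phi_k(\vertex)^2\to\sum_{\vertex\in\calD}\phi_h(\vertex)^2,
  \qquad
  \int_\Omega\frac{\partial^2\fraka}{\partial u^2}(\cdot,\bar u)\phi_k^2\bar p\,\mathrm{d}x\to\int_\Omega\frac{\partial^2\fraka}{\partial u^2}(\cdot,\bar u)\phi_h^2\bar p\,\mathrm{d}x,
\]
where the second limit uses \Cref{A3} and $\bar p\in L^1(\Omega)$. For the quadratic term, weak lower semicontinuity gives $\liminf\alpha\|h_k\|^2_{L^2(\Omega)}\geq\alpha\|h\|^2_{L^2(\Omega)}$. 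Combining these,
\[
  j''(\bar q)(h,h)\leq\liminf_k j''(\bar q)(h_k,h_k)\leq0.
\]
Since $h\in C_{\bar q}$, the hypothesis $j''(\bar q)(v,v)>0$ on $C_{\bar q}\setminus\{0\}$ forces $h=0$.

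\emph{Step 3: contradiction.} With $h=0$ we have $\phi_h=0$, so the point-evaluation and nonlinear terms above both tend to $0$. The expansion of $j''(\bar q)(h_k,h_k)$ then reduces to $\alpha\|h_k\|^2_{L^2(\Omega)}+o(1)=\alpha+o(1)$. Together with $\limsup_k j''(\bar q)(h_k,h_k)\leq0$ this gives $\alpha\leq0$, a contradiction. Hence there exist $\mu,\sigma>0$ for which \cref{eq:quadratic_grow} holds.

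The main obstacle is the passage from weak $L^2$ convergence of $h_k$ to convergence of the point-evaluation and adjoint-weighted terms. This rests on the compactness of $\polH^{2s}(\Omega)\hookrightarrow C(\bar\Omega)$ and the fact that the singular adjoint $\bar p$ is only required in $L^1(\Omega)$ against a uniformly convergent integrand. The uniform remainder control comes from \cref{eq:jpp_Lipschitz}.
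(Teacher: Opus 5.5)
Your proof is correct. It follows the paper's skeleton (contradiction sequence, normalized directions $h_k \rightharpoonup h$, then $h\in C_{\bar q}$, then $h=0$, then $\alpha\le 0$), but it handles the intermediate points differently.

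In Step 1 the paper uses a first-order mean value theorem, $j(q_k)-j(\bar q)=j'(\tilde q_k)(q_k-\bar q)$. This forces it to prove $\tilde p_k\to\bar p$ in $\mathbb{H}^{s-\theta}(\Omega)$, via \cref{thm:BNB} and \cref{eq:Lipschitz_property}, so that $\tilde p_k+\alpha\tilde q_k\to\bar{\mathsf d}$ strongly in $L^2(\Omega)$. You instead expand to second order about $\bar q$. You absorb the remainder with a uniform bound on $j''$ over $\mathbb{Q}_{ad}$, which follows from \cref{eq:jpp_characterization}, \cref{eq:adjoint_stability} and the uniform $C(\bar\Omega)$ bounds on states and linearized states. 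As a result, only the fixed function $\bar{\mathsf d}$ is ever tested against $h_k$.

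In Steps 2--3 the paper passes to the limit directly in $j''(\hat q_k)(h_k,h_k)$. That requires $\hat u_k\to\bar u$, $\hat p_k\to\bar p$, convergence of $\mathcal{S}'(\hat q_k)h_k$ with a moving base point, and the splitting into $\mathfrak{L}_1^k$, $\mathfrak{L}_2^k$, $\mathfrak{L}_3^k$. You first apply \cref{eq:jpp_Lipschitz}, which is legitimate since $\hat q_k\in\mathbb{Q}_{ad}$ by convexity, to replace $\hat q_k$ by $\bar q$ at a cost of $\mathfrak{C}\rho_k$. After that the only moving object is $\mathcal{S}'(\bar q)h_k$. This is a fixed bounded linear map composed with the compact embedding $\mathbb{H}^{2s}(\Omega)\hookrightarrow C(\bar\Omega)$, so uniform convergence is immediate.

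Your route is shorter because it reuses work already packaged in \cref{eq:jpp_Lipschitz}. In fact it only needs continuity of $j''$ at $\bar q$ in the norm of bilinear forms, which \cref{pro:j_is_of_class_C2} already provides. The paper's route instead stays at the level of explicit PDE estimates along the specific sequence, and it never needs a uniform bound on $j''$. The price is that it essentially repeats the stability arguments behind \cref{eq:jpp_Lipschitz}.
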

\begin{proof}
We proceed by contradiction and assume that \cref{eq:quadratic_grow} does not hold; that is, for each $k \in \Natural$, there exists $q_k \in \mathbb{Q}_{ad}$ such that
\begin{equation}
\label{eq:contradiction}
 j(q_k) < j(\bar{q}) + \frac{1}{2k} \| q_k - \bar{q} \|^2_{L^2(\Omega)},
 \qquad
\| q_k - \bar{q} \|_{L^2(\Omega)} < \frac{1}{k}.
\end{equation}
For each $k \in \Natural$, we define $\rho_k \coloneqq \| q_k - \bar{q} \|_{L^2(\Omega)}$ and $h_k \coloneqq \rho_k^{-1}(q_k - \bar{q})$. It is clear that $\| h_k \|_{L^2(\Omega)} =1 $ for all $k \in \Natural$. We can thus conclude the existence of a nonrelabeled subsequence $\{ h_k \}_{k \in \Natural}$ such that $h_k \rightharpoonup h$ in $L^2(\Omega)$ as $k \uparrow \infty$.  We now proceed in several steps.

\emph{Step 1: $h \in C_{\bar{q}}$}. First, note that each $h_k \in \pluto_{\bar q}$. Since $\pluto_{\bar q}$ is a closed and convex set in $\Ldeux$ and, as $k \uparrow \infty$, we have $h_k \rightharpoonup h$ in $L^2(\Omega)$, we deduce that $h \in \pluto_{\bar q}$ as well. We now prove that $h \in \Upsilon_{\bar q}$. To do this, for each $k \in \Natural$, we use a mean value theorem on the function $\theta \mapsto j( \bar{q} + \theta(q_k - \bar{q} ) )$ to obtain, for some $\theta_k \in (0,1)$,
\[
  j(q_k) - j(\bar{q}) = j'\left( \tilde{q}_k \right) (q_k - \bar{q}), \qquad \tilde{q}_k = \bar{q} + \theta_k(q_k - \bar{q}).
\]
Then, using \cref{eq:contradiction}, we deduce that
\[
  j'(\tilde{q}_k) h_k = \frac1{\rho_k} j'\left( \tilde{q}_k \right) (q_k - \bar{q}) = \frac{ j(q_k) - j(\bar{q}) }{\rho_k} < \frac{\rho_k}{2k}.
\]
Since the sequence $\{\rho_k\}_{k \in \Natural}$ is bounded, we conclude that
\[
  \limsup_{k \uparrow \infty} j'(\tilde{q}_k) h_k \leq \lim_{k \uparrow \infty} \frac{\rho_k}{2k} = 0.
\]
We now compute
\[
  j'(\tilde{q}_k) h_k = \int_{\Omega} (\tilde{p}_k + \alpha \tilde{q}_k) h_k \mathrm{d}x,
\]
where $\tilde{p}_k$ denotes the solution to \cref{eq:adjoint_equation_weak} with $u$ replaced by $\tilde{u}_k \coloneqq \calS \tilde{q}_k$. We claim that $\tilde{p}_k \to \bar{p}$ in $\polH^{s-\theta}(\Omega)$ as $k \uparrow \infty$. To see this, we first note that an immediate application of the Lipschitz property \cref{eq:Lipschitz_property} shows that
\begin{equation}
\label{eq:convergence_u}
\| \bar{u} - \tilde{u}_k \|_{\mathbb{H}^{2s}(\Omega)} + \|\bar{u} - \tilde{u}_k \|_{C(\bar \Omega)} \lesssim \| \bar{q} - \tilde{q}_k \|_{L^2(\Omega)}
\leq \| \bar{q} - q_k \|_{L^2(\Omega)} \rightarrow 0
\end{equation}
as $k \uparrow \infty$. Here, we also used that $\tilde{q}_k = \bar{q} + \theta_k(q_k - \bar{q})$, the fact that $\theta_k \in (0,1)$, and \cref{eq:contradiction}. With this, we write the linear problem that $\bar{p} - \tilde{p}_k$ solves, apply the stability bound from \cref{thm:BNB}, and estimate the corresponding forcing term to obtain
 \begin{equation}
  \label{eq:convergence_p}
  \| \bar{p} - \tilde{p}_k \|_{\mathbb{H}^{s-\theta}(\Omega)} \lesssim
  \|\bar{u} - \tilde{u}_k \|_{C(\bar \Omega)}
  \lesssim
  \| \bar{q} - q_k \|_{L^2(\Omega)} \rightarrow 0
 \end{equation}
as $k \uparrow \infty$.
Since we also have that $\tilde{q}_k \to \bar{q}$ in $\Ldeux$, we conclude that $\mathsf{d}_k \coloneqq \tilde{p}_k + \alpha \tilde{q}_k \rightarrow \bar{p} + \alpha \bar{q} = \bar{\mathsf{d}}$ in $L^2(\Omega)$ as $k \uparrow \infty$. Recall that $s-\theta > 0$. We now use that, as $k \uparrow \infty$, we have $h_k \rightharpoonup h$ in $L^2(\Omega)$, to obtain
\begin{equation}
\label{eq:aux_bound}
 j'(\bar{q})h = \int_{\Omega} \bar{\mathsf{d}}(x) h(x) \mathrm{d}x = \lim_{k \uparrow \infty}  \int_{\Omega} \mathsf{d}_k(x) h_k(x) \mathrm{d}x = \lim_{k \uparrow \infty} j'(\tilde{q}_k)h_k \leq 0.
\end{equation}
We now use the definition of $h_k$, the first order optimality condition \cref{eq:first_order}, and the fact that $q_k \in \polQ_{ad}$ to obtain
\[
  \int_{\Omega} \bar{\mathsf{d}}(x)h_k(x) \mathrm{d}x = \rho_k^{-1} \int_{\Omega} \bar{\mathsf{d}}(x)( q_k(x) - \bar{q}(x) ) \mathrm{d}x \geq 0
\]
for all $k \in \Natural$. Take the limit as $k\uparrow \infty$, and combine with \cref{eq:aux_bound} to deduce that
\[
  \int_{\Omega} \bar{\mathsf{d}}(x) h(x) \mathrm{d}x = 0.
\]
Finally, since $h \in \pluto_{\bar q}$,
\[
  \int_{\Omega} \bar{\mathsf{d}}(x) h(x) \mathrm{d}x = \int_{\Omega} |\bar{\mathsf{d}}(x) h(x)| \mathrm{d}x,
\]
Thus, for almost every $x \in \Omega$ such that $\bar{\mathsf{d}}(x) \neq 0$, we have $h(x) = 0$, and so $h \in C_{\bar q}$.

\emph{Step 2: $h \equiv 0$}. We begin with a straightforward application of Taylor’s theorem to obtain
\[
   j(q_k) - j(\bar{q}) - j'(\bar{q}) (q_k - \bar{q})
 =
 \frac{1}{2}j''(\hat{q}_k) (q_k - \bar{q}, q_k - \bar{q} )
 =
 \frac{\rho_k^2}{2}j''(\hat{q}_k)(h_k,h_k),
\]
where $\hat{q}_k = \bar{q} + \theta_k (q_k - \bar{q})$ and $\theta_k \in (0,1)$. The fact that $q_k \in \polQ_{ad}$, together with \cref{eq:first_order_basic} and \cref{eq:contradiction}, then implies
\[
  \frac{\rho_k^2}{2}j''(\hat{q}_k) (h_k,h_k)
 \leq
 j(q_k) - j(\bar{q})
 < \frac{\rho_k^2}{2k},
\]
and thus
\[
  \limsup_{ k \uparrow \infty} j''(\hat{q}_k) (h_k,h_k) \leq \lim_{k \uparrow \infty} \frac1k = 0.
\]
Our goal is now to show that $j''(\bar q) (h,h) \leq \liminf_{k \uparrow \infty} j''(\hat{q}_k) (h_k,h_k)$. We use the characterization \cref{eq:jpp_characterization} for $j''$ and write
\begin{equation}
\label{eq:jpp_hatqk_hk2}
 j''(\hat{q}_k)(h_k,h_k) = \alpha \| h_k\|^2_{L^2(\Omega)} - \int_{\Omega} \frac{\partial^2 \fraka}{\partial u^2}(\cdot,\hat{u}_k) \phi_{h_k}^2 \hat{p}_k \mathrm{d}x + \sum_{\vertex \in \mathcal{D}} \phi_{h_k}^2(\vertex),
\end{equation}
where $\hat{u}_k \coloneqq \calS \hat{q}_k$, $\hat{p}_k$ denotes the solution to \cref{eq:adjoint_equation_weak} with $u$ replaced by $\hat{u}_k$, and $\phi_{h_k} \coloneqq \calS'(\hat{q}_k) h_k$. Similar arguments as those developed in  \emph{Step 1} show that
\begin{equation}
\label{eq:convergence_hat}
\|\bar{u} - \hat{u}_k \|_{\mathbb{H}^{2s}(\Omega)}
+
\|\bar{u} - \hat{u}_k \|_{C(\bar \Omega)}
+
\| \bar{p} - \tilde{p}_k \|_{\mathbb{H}^{s-\theta}(\Omega)}
  \lesssim
  \| \bar{q} - q_k \|_{L^2(\Omega)} \rightarrow 0,
\end{equation}
as $k \uparrow \infty$. Define $\bar{\phi} \coloneqq \calS'(\bar{q})h$. Since the problem that $\bar{\phi} - \phi_{h_k}$ solves is linear and $h_k \rightharpoonup h$ in $L^2(\Omega)$ as $k\uparrow \infty$, it can be deduced that, as $k \uparrow \infty$,
\begin{equation}
\label{eq:convergece_phi_k}
 \phi_{h_k} \rightharpoonup \bar{\phi}~\textrm{in}~\mathbb{H}^{2s}(\Omega)
 \qquad \implies \qquad
  \phi_{h_k} \rightarrow \bar{\phi}~\textrm{in}~C(\bar\Omega),
\end{equation}
where we used the compact embedding $\polH^{2s}(\Omega) \hookrightarrow C(\bar \Omega)$. Using \cref{eq:convergece_phi_k}, we  immediately deduce that
\[
  \sum_{\vertex \in \mathcal{D}} \phi_{h_k}^2(\vertex) \rightarrow \sum_{\vertex \in \mathcal{D}} \bar{\phi}^2(\vertex) .
\]
As the next step, we estimate
\begin{multline*}
\left|
\int_{\Omega} \left( \frac{\partial^2 \fraka}{\partial u^2}(\cdot,\bar{u}) \bar{\phi}^2 \bar{p}
-
\frac{\partial^2 \fraka}{\partial u^2}(\cdot,\hat{u}_k) \phi_{h_k}^2 \hat{p}_k \right) \mathrm{d}x
\right|
\leq
\left|
\int_{\Omega}  \left( \frac{\partial^2 \fraka}{\partial u^2}(\cdot,\bar{u})
-
\frac{\partial^2 \fraka}{\partial u^2}(\cdot,\hat{u}_k) \right) \bar{\phi}^2 \bar{p} \mathrm{d}x
\right|
\\
+
\left|
\int_{\Omega}
\frac{\partial^2 \fraka}{\partial u^2}(\cdot,\hat{u}_k) (\bar{\phi}^2 - \phi_{h_k}^2) \bar{p} \mathrm{d}x
\right|
+
\left|
\int_{\Omega}
\frac{\partial^2 \fraka}{\partial u^2}(\cdot,\hat{u}_k) \phi_{h_k}^2 (\bar{p} - \hat{p}_k) \mathrm{d}x
\right| \eqqcolon \mathfrak{L}_1^k + \mathfrak{L}_2^k + \mathfrak{L}_3^k.
\end{multline*}
The term $\mathfrak{L}_1^k$ can be estimated using \Cref{A3}, an application of the stability bound \cref{eq:adjoint_stability} to $\bar{p}$, the regularity bounds $\| \bar{\phi} \|_{L^{\infty}(\Omega)} \lesssim \| \bar{\phi} \|_{\mathbb{H}^{2s}(\Omega)} \lesssim \|h\|_{L^2(\Omega)}$,
and \cref{eq:convergence_hat}:
\[
 \mathfrak{L}_1^k \lesssim  \|\bar{u} - \hat{u}_k \|_{C(\bar \Omega)}\|\bar{\phi} \|^2_{L^{\infty}(\Omega)} \|\bar{p} \|_{L^{1}(\Omega)}
 \lesssim
 \| \bar{q} - q_k \|_{L^2(\Omega)} \rightarrow 0,
\]
as $k \uparrow \infty$. We emphasize that the hidden constant in both bounds is independent of $k$. The control of the term $\mathfrak{L}_2^k$ is as follows:
\[
 \mathfrak{L}_2^k \lesssim  \| \bar{\phi} - \phi_{h_k} \|_{L^{\infty}(\Omega)} \left( \| \bar{\phi} \|_{L^{\infty}(\Omega)} + \| \phi_{h_k} \|_{L^{\infty}(\Omega)}\right) \|\bar{p} \|_{L^{1}(\Omega)}
 \lesssim
  \| \bar{\phi} - \phi_{h_k} \|_{L^{\infty}(\Omega)}.
\]
To obtain the last bound, we have used $\| \phi_{h_k} \|_{L^{\infty}(\Omega)} \lesssim \| \phi_{h_k} \|_{\mathbb{H}^{2s}(\Omega)} \lesssim \|h_k\|_{L^2(\Omega)} = 1$. Again, in all the previously derived bounds the hidden constant is independent of $k$. Using \cref{eq:convergece_phi_k}, we deduce that $\mathfrak{L}_{2}^k \rightarrow 0$ as $k \uparrow \infty$. The control of $\mathfrak{L}_{3}^k$ follows similar arguments:
\[
 \mathfrak{L}_3^k \lesssim   \| \phi_{h_k} \|^2_{L^{\infty}(\Omega)}
  \| \bar{p} - \hat{p}_k \|_{L^{1}(\Omega)}
  \lesssim \| \bar{p} - \hat{p}_k \|_{\mathbb{H}^{s-\theta}(\Omega)}
   \lesssim
  \| \bar{q} - q_k \|_{L^2(\Omega)} \rightarrow 0,
\]
as $k \uparrow \infty$. After obtaining these convergence results, we are ready to take the limit in \cref{eq:jpp_hatqk_hk2} as $k \uparrow \infty$. We do this using the fact that $h_k \rightharpoonup h$ in $L^2(\Omega)$ as $k \uparrow \infty$ and the weak lower semicontinuity of $\| \cdot \|^2_{L^2(\Omega)}$:
\[
 \liminf_{k \uparrow \infty} j''(\hat{q}_k) (h_k,h_k) \geq \alpha \| h \|^2_{L^2(\Omega)} - \int_{\Omega} \frac{\partial^2 \fraka}{\partial u^2}(\cdot,\bar u) \bar{\phi}^2 \bar{p} \mathrm{d}x + \sum_{\vertex \in \mathcal{D}} \bar{\phi}^2(\vertex) = j''(\bar{q}) (h,h).
\]
As a result, we have proved that $j''(\bar{q})(h, h) \leq \liminf_{k \uparrow \infty} j''(\hat{q}_k)( h_k,h_k) \leq 0$. Since $h \in C_{\bar{q}}$ and we have assumed that $j''(\bar{q}) (v,v) > 0$ for all $v \in C_{\bar{q}} \setminus \{ 0 \}$, we conclude that $h \equiv 0$.

\emph{Step 3}. Since $h \equiv 0$, it is immediate that $\phi_{h_k} \rightarrow 0$ in $C(\bar \Omega)$ as $k \uparrow \infty$. Let us now use the relation \cref{eq:jpp_hatqk_hk2} and the fact that $\| h_k \|_{L^2(\Omega)} = 1$ for every $k \in \Natural$ to write
\[
 \alpha = \alpha \| h_k \|^2_{L^2(\Omega)} =
 j''(\hat{q}_k)(h_k,h_k)
 +
 \int_{\Omega} \frac{\partial^2 \fraka}{\partial u^2}(\cdot,\hat{u}_k) \phi_{h_k}^2 \hat{p}_k \mathrm{d}x - \sum_{\vertex \in \mathcal{D}} \phi_{h_k}^2(\vertex).
\]
If we take the limit inferior as $k \uparrow \infty$, we obtain $\alpha \leq 0$ because $\liminf_{k \uparrow \infty} j''(\hat{q}_k) h_k^2 \leq 0$. This is a contradiction because $\alpha >0$ and concludes the proof.
\end{proof}

\section*{Funding}

The work of EO has been partially supported by ANID grant FONDECYT 1220156 and by USM through USM  project 2025 PI LII 25 12. The work of AJS has been partially supported by NSF grant DMS-2409918.


\bibliographystyle{siamplain}
\bibliography{biblio}

\end{document}